\newtheorem{prop}{Proposition}
\newtheorem{lem}{Lemma}
\theoremstyle{definition}
\newtheorem{df}{Definition}
\newtheorem{rmk}{Remark}
\newtheorem*{mainthmA}{Main Theorem A}
\newtheorem*{mainthmB}{Main Theorem B}
\newcommand{\mE}{\mathcal{E}}
\newcommand{\mD}{\mathcal{D}}
\newcommand{\ZZ}{\mathbb{Z}}
\newcommand{\mF}{\mathcal{F}}
\newcommand{\f}{\mathfrak{f}}
\newcommand{\mG}{\mathcal{G}}
\newcommand{\mA}{\mathcal{A}}
\newcommand{\mL}{\mathcal{L}}
\newcommand{\mV}{\mathcal{V}}
\newcommand{\G}{\Gamma}
\newcommand{\LDP}{Lonely Direction Property}
\newcommand{\vphi}{\varphi}
\newcommand{\veps}{\varepsilon}
\newcommand{\mT}{\mathcal{T}}
\newcommand{\teich}{Teichm\"{u}ller }
\newcommand{\from}{\colon}
\newcommand{\out}{\textup{Out}(F_r)}
\newcommand{\os}{\rm{CV}}
\newcommand{\cv}{\os}
\begin{document}

\title{Taking the high-edge route of rank-$3$ Outer space}
\author{Damara Gagnier, Catherine Pfaff}

\thanks{The authors acknowledge support from NSERC (the Natural Sciences and Engineering Research Council of Canada) and the Queen's University Department of Mathematics and Statistics.}

\maketitle

\begin{abstract}
Principal fully irreducible outer automorphisms were introduced in \cite{stablestrata} to emulate
principal pseudo-Anosov surface homeomorphisms,
i.e. those whose attracting and repelling invariant foliations have only 3-pronged singularities.
It is proved in \cite{stablestrata} that these outer automorphisms semi-mimic principal pseudo-Anosov surface homeomorphisms
in important ways. We prove here which rank-3 graphs carry train track representatives of
principal fully irreducible outer automorphisms.
As a corollary, one obtains which simplices of rank-3 Culler-Vogtmann Outer space principal axes pass through.
 \end{abstract}

\section{Introduction}
Let $\out$ denote the outer automorphism group of a fixed free group
of rank $r\geq 3$. This group's study has fundamental
contributions dating back to Nielsen, starting in 1915, and Whitehead
in the 1930's to 1950's.  Recent substantial progress has utilized that, just as the mapping class group of a surface encodes all
(equivalence classes of) homeomorphisms of the surface, $\out$ encodes
all (equivalence classes of) homotopy equivalences of graphs of a
fixed rank.  In particular, this provides a powerful analogue of
Teichm\"{u}ller space, namely Outer space (denoted $CV_r$), the deformation space of marked metric graphs. This correspondence further gives rise to related analogues of the train track theory.

Pseudo-Anosov surface homeomorphisms are the generic (in a random walk sense, see \cite{r08,KM98,cm10,m11,Sisto,horoboundary,MaherSisto}) and dynamically minimal elements of a mapping class group. The $\out$-analogues are fully irreducible outer automorphisms. Various methods for constructing fully irreducible outer automorphisms with an assortment of properties can be found, for example, in \cite{cp10}, \cite{IWGII}, \cite{IWGIII}, and \cite{c15}. We focus on ``principal'' fully irreducible outer automorphisms, as introduced in \cite{stablestrata} to emulate principal pseudo-Anosov surface homeomorphisms. Principal pseudo-Anosov homeomorphisms were proved generic (in a random-walk sense) in \cite{gm16}. Principal structures in the pseudo-Anosov and \teich space situation play an important role. For each genus, there is a partial ordering on the set of singularity structures, and the 3-pronged (i.e. principal) structure is the unique maximal element. This partial ordering has dynamical significance in that, in the total geodesic flow of the moduli space, the closure of a stratum associated to one singularity structure contains the closure of a lower stratum associated to a second singularity structure if and only if the first structure is greater than the second in the partial ordering. Principal pseudo-Anosovs homeomorphisms play a fundamental role in the work of Masur, for example, in \cite{m82} The existence of at least one principal fully irreducible outer automorphism in each rank was proved in \cite{stablestrata}.

\begin{mainthmA}
The graphs carrying train track representatives of principal fully irreducible outer automorphisms in $Out(F_3)$ are precisely:
~\\
\vspace{-5mm}
\begin{figure}[H]
\centering
\includegraphics[width=4in]{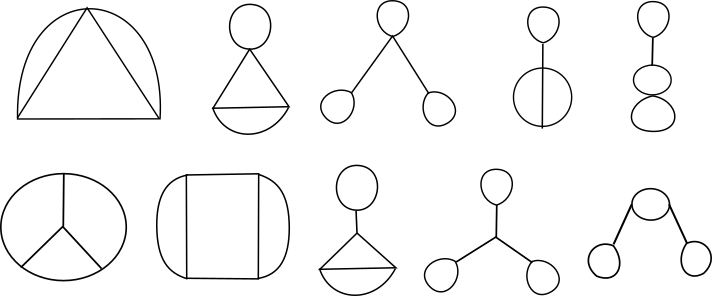}
\label{fig:21Graphs}
\end{figure}
That is, each of the graphs of $CV_3$ where either all vertices are valence-3 or all vertices are valence-3 except that one is valence-4 carries a train track representative a of principal fully irreducible outer automorphism in $Out(F_3)$.\footnote{The complete automaton can be found written out at {\url{https://mast.queensu.ca/~cpfaff/Automaton}}.}
\end{mainthmA}

A fully irreducible outer automorphism in $\out$ (more specifically Stallings fold decompositions of train track representatives of the outer automorphism, please see Definition \ref{d:folds}) determines geodesics (called axes) in $CV_r$. In \cite{stablestrata}, it is shown that the axes of principal fully irreducible outer automorphisms share a certain ``stability'' property with principal  pseudo-Anosov axes in Teichm\"{u}ller space. This stability property is then used in \cite{randomout} and \cite{hittingmeasureout} to not only prove which outer automorphisms are random walk generic, but to understand properties held by a typical (with respect to the harmonic measure) tree in the boundary of Outer space.

Outer space is a simplicial complex minus some faces: it has an open simplex for each marked graph (obtained by varying the lengths of the edges of that graph). The faces of a simplex are obtained by collapsing the edges of a forest (making their lengths zero). It is known (via \cite{loneaxes}) that a principal fully irreducible outer automorphism has only a single axis and that this axis is determined by a sequence of ``proper full folds.'' It is further known that it stays in the highest two dimensions of open simplices. It has not previously been known precisely which of these simplices the principal axes pass through. The main results of this paper also determine these simplices in rank 3.

\begin{mainthmB}
The open simplices in $CV_3$ with principal axes passing through them are precisely those whose underlying graph is listed in Main Theorem A, i.e. each of the simplices in the highest 2 dimensions.
\end{mainthmB}

It is worth remarking that the code written determining the automata in this paper could in theory be used to determine the automata in other ranks (computation capacity permitting). The arguments used are also likely extendable to prove results in specific higher-rank settings of interest, with computation time again likely the most significant obstacle.

\subsection*{Acknowledgements}
This work is inspired by results obtained together with Lee Mosher, Yael Algom-Kfir, and Ilya Kapovich. The second author is grateful to their knowledge and insights shared over the years. We thank Martin Lustig for pointing out an important oversight in a previous version and the referee for their helpful suggestions.

\section{Background}
	
	We assume throughout this paper that $r\ge 3$ is an integer, and that $F_r$ is the rank-$r$ free group with a fixed free basis.

\subsection{Edge maps \& train track maps}{\label{ss:EdgeMapsTTMaps}}
	
\begin{df}[High-valence graphs]
		We call a graph $\G$ \textit{high-valence} if the valence of each vertex is $\geq 3$.
\end{df}
	
\begin{df}[Edge, vertex, \& direction sets $E(\G)$, $E^+(\G)$, $V(\G)$, $\mathcal{D}(\G)$, $\mathcal{D}(\G,v)$, \& Turns]
		Let $\G$ be a graph with positively oriented edges $\{e_1, e_2 \dots, e_n\}$ and vertices $\{v_1, v_2, \dots, v_m\}$. Then $E(\G) = \{e_1, \overline{e_1}, \dots, e_n, \overline{e_n}\}$ will denote the set of edges of $\G$ with each orientation listed separately and an overline indicating a reversal of orientation (as it will throughout for both edges and paths). We let $E^+(\G) = \{e_1, e_2 \dots, e_n\}$ denote the set of positively-oriented edges of $\G$ and  $V(\G) = \{v_1, v_2, \dots, v_n\}$ the vertex set of $\G$.
		
A \textit{direction at $v$} will mean an element of $E(\G)$ with initial vertex $v$. We denote the set of directions of $\G$ by $\mathcal{D}(\G)$ and the set of those at $v$ by $\mathcal{D}(\G,v)$.
		
A \textit{turn at $v$} will mean an unordered pair $\{d_1, d_2\}$ of directions $d_1,d_2 \in \mathcal{D}(\G,v)$. The turn is \emph{degenerate} if $d_1=d_2$, and \emph{nondegenerate} otherwise.
	\end{df}
	
	\begin{df}[Paths \& loops] Let $\G$ be a graph. A \textit{path $\rho$ in $\G$} will mean a finite sequence

\noindent $(a_1, a_2, \dots, a_m)$ such that 
\begin{itemize}
  \item[1.] $a_j \in E(\G)$ for each $j$ and
  \item[2.] there exists a sequence $(v_1, v_2, \dots, v_{m+1})$ of vertices in $\G$ satisfying that, for each $i=2,3,\dots,m$, the turn $\{\overline{a_{i-1}}, a_i\}$ is at $v_i$. 
\end{itemize}
\vspace{-0.5em}
\parpic[r]{\includegraphics[width=1in]{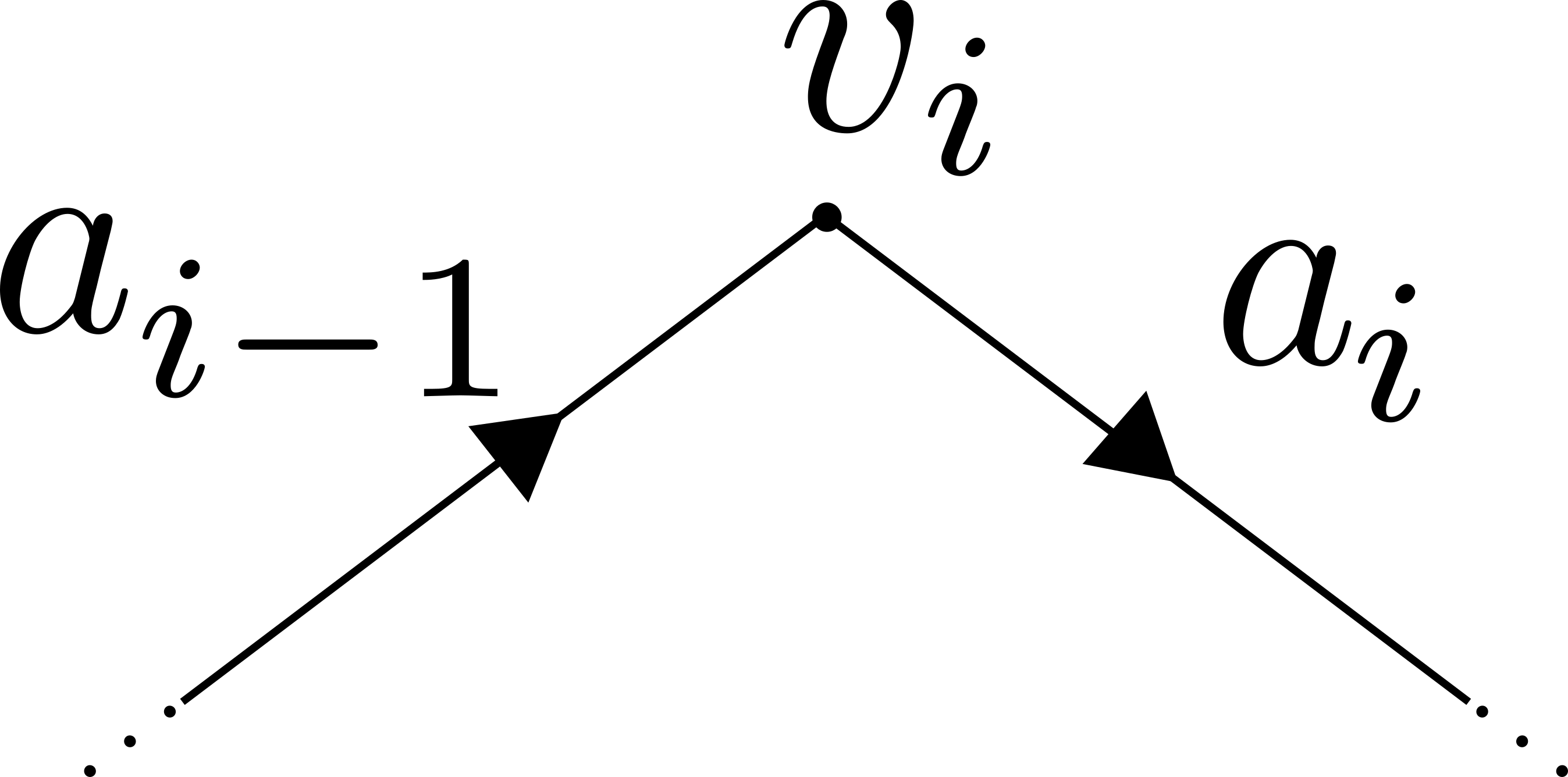}}
For such a path $(a_1, a_2, \dots, a_m)$ we write $\rho = a_1a_2\dots a_m$ and say $\rho$ \textit{contains} the oriented edges $a_1, a_2, \dots, a_m$ and \textit{takes} the turns 
$\{\overline{a_1}, a_2\}$, $\{\overline{a_2}, a_3\}$, $\dots$, $\{\overline{a_{n-1}}, a_n\}$. If $v_1 = v_{m+1}$, we call $\rho$ a \textit{loop in $\G$}. We consider two loops to be equivalent if they are equal up to a cyclic permutation of their edges. Sometimes we ignore orientation in a loop $\ell$, considering $\ell$ and $\overline{\ell}$ the same. In such cases, we call $\ell$ \textit{unoriented}. Otherwise, we call $\ell$ \textit{oriented}. If $\ell$ traverses each edge of $\G$ in at least one of its orientations at least once, we call $\ell$ \textit{comprehensive}. We call both a path and loop $\ell$ \textit{tight} if for each $e\in E(\G)$, the path $e\overline{e}$ does not appear in $\ell$. We may \textit{tighten} $\ell$ by removing, for each $e\in E(\G)$, all instances of $e\overline{e}$ in $\ell$.
\end{df}

\begin{df}[Edge maps, direction map $Dg$]
By an \textit{edge map} $g:\G\to\G'$ we will mean
\begin{itemize}
\item a map $\mV:V(\G)\to V(\G')$, where we write $g(v)$ for $\mV(v)$, together with
\item an assignment, for each $e\in E(\G)$, of a path (denoted $g(e)$) in $\G'$ such that
\begin{enumerate}
\item if the initial vertex of $e$ is $v$, then the initial vertex of $g(e)$ is $g(v)$, and
\item if $g(e)$ is the edge path $g(e) = a_1a_2\dots a_m$, then $g(\overline{e})$ will be the concatenation of edge paths $g(\overline{e}) = \overline{a_m}\dots\overline{a_2}~\overline{a_1}$.
\end{enumerate}
\end{itemize}
If $\gamma = e_1e_2\dots e_n$ is a path in $\G$ for some $e_1, e_2, \dots, e_n \in E(\G)$, then by $g(\gamma)$ we will mean the concatenation of edge paths $g(\gamma) = g(e_1)g(e_2)\dots g(e_n)$.

We call $g$ \emph{tight} if the image of each edge is a tight path. We often, without explicitly writing it, assume edge maps are tight.

Viewing $g$ as a continuous map of graphs, we say $g$ \emph{represents} $\vphi$ when $\pi_1(\G)$ has been identified with $F_r$, i.e. $\G$ is \emph{marked}, and $\vphi$ is the induced map of fundamental groups. When a marking is not explicitly given, we mean that ``there exists a marking such that.''
		
To $g$ we associate a \emph{direction map} $Dg:\mD(\G)\to\mD(\G')$ such that if $g(e) = a_1 a_2\dots a_m$, for some $m\geq 1$ and $a_1,a_2,\dots,a_m \in E(\G)$, then $Dg(e) = p_1$ and $Dg(\overline{e}) = \overline{a_m}$. We call a direction $e$ \emph{periodic} if $Dg^k(e) = e$ for some $k>0$, and \emph{fixed} if $k=1$.
\end{df}

\begin{rmk}
		Let $\rho$ be a path in a graph $\G$ and $g$ an edge map with domain $\G$. Then $g(\rho)$ is tight if and only if $g$ can be viewed as a continuous map that is locally injective on $\rho$.
	\end{rmk}
	
\begin{df}[Interior \& exterior turns]\label{d:IntExtTurns}
Suppose $g:\Gamma\to\Gamma'$ is an edge map, $\gamma$ a path in $\Gamma$, and $\gamma' \coloneqq g(\gamma)$. The \textit{interior turns of $g$ in $\Gamma'$} will be the turns taken by the paths $g(E)$, such that $E\in E^+(\Gamma)$. The \textit{interior turns of $g$ in $\gamma'$} will be the turns taken by the paths $g(E)$ such that $E\in E(\Gamma)$ and $\gamma$ contains $E$. If $\{a,b\}$ is a turn in $\gamma$, then the turn $\{Dg(a), Dg(b)\}$ in $\gamma'$ is called an \textit{exterior turn of $g$ in $\gamma'$}.
\end{df}

\begin{df}[Illegal turns \& transparent edge maps]\label{d:Transparent} Let $g \colon \Gamma \to \Gamma'$ be an edge map. We call a turn $\{d_1,d_2\}$ in $\Gamma$ g-\emph{prenull} if $\{Dg(d_1),Dg(d_2)\}$ is degenerate.
When $g\from \Gamma \to \Gamma$ is a self-map, the turn $\{d_1,d_2\}$ is called an \emph{illegal turn} for $g$ if $\{Dg^k(d_1),Dg^k(d_2)\}$ is degenerate for some $k$.
 We call $g$ \emph{transparent} if each illegal turn is prenull, if each turn taken by $g^k(e)$ for some $k\in\mathbb{N}$ and $e\in E(\G)$ is taken by $g(e)$, and if each $g$-periodic vertex and direction is in fact fixed by $g$. Notice that every self-map has a transparent power. This is a strengthening of the notion given in \cite{stablestrata} and so in particular implies it.
\end{df}

\begin{lem}{\label{l:dirimg}}
		Suppose $F:\Gamma\to\Gamma'$ is an edge map, $\gamma$ a path in $\Gamma$, and $\gamma' \coloneqq F(\gamma)$. Then
\begin{enumerate}
			\item if $\gamma$ takes the turn $\{a,b\}$ in $\Gamma$, then $\gamma'$ takes the turn $\{DF(a), DF(b)\}$ in $\Gamma'$, and
			\item each turn in $\gamma'$ is either an interior or an exterior turn of $F$ in $\gamma'$ before tightening.
\end{enumerate}
\end{lem}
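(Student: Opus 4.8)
The plan is to prove both parts by unwinding the definitions along the ``block decomposition'' of $\gamma'$. Write $\gamma = e_1 e_2 \cdots e_n$ with $e_i \in E(\Gamma)$, and $F(e_i) = a^i_1 a^i_2 \cdots a^i_{m_i}$, so that, as an untightened concatenation (consistent with the ``before tightening'' clause), $\gamma' = F(e_1) F(e_2) \cdots F(e_n) = a^1_1 \cdots a^1_{m_1}\, a^2_1 \cdots a^n_{m_n}$. Here I use that edge maps send edges to nontrivial paths, so $m_i \geq 1$ and each block contributes at least one edge (this is exactly the hypothesis under which the direction map $DF$ is defined in the excerpt). From that definition, $DF(e_i) = a^i_1$ and $DF(\overline{e_i}) = \overline{a^i_{m_i}}$.

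For (1): a turn taken by $\gamma$ has the form $\{\overline{e_i}, e_{i+1}\}$ for some $1 \leq i < n$, based at the vertex $v$ that is the terminal vertex of $e_i$ and the initial vertex of $e_{i+1}$. At the seam between the $i$-th and $(i{+}1)$-st blocks of the concatenation, $\gamma'$ takes the turn $\{\overline{a^i_{m_i}}, a^{i+1}_1\}$; by the edge-map axiom on initial vertices, both directions are based at $F(v)$, so this is a genuine turn, and it equals $\{DF(\overline{e_i}), DF(e_{i+1})\}$. This proves (1) (including the degenerate case, where $e_{i+1} = \overline{e_i}$ forces both sides to be degenerate).

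For (2): write $\gamma' = c_1 c_2 \cdots c_L$, whose turns are the $\{\overline{c_j}, c_{j+1}\}$ for $1 \leq j < L$. In the block decomposition, each consecutive pair $c_j, c_{j+1}$ is of exactly one of two types. If they lie in the same block, say $c_j = a^i_\ell$ and $c_{j+1} = a^i_{\ell+1}$, then $\{\overline{c_j}, c_{j+1}\}$ is one of the turns taken by the path $F(e_i)$; since $e_i \in E(\Gamma)$ and $\gamma$ contains $e_i$, this is an interior turn of $F$ in $\gamma'$ by Definition \ref{d:IntExtTurns}. Otherwise $c_j = a^i_{m_i}$ and $c_{j+1} = a^{i+1}_1$ are the last and first edges of consecutive blocks, and by the computation in part (1), $\{\overline{c_j}, c_{j+1}\} = \{DF(\overline{e_i}), DF(e_{i+1})\}$ is the exterior turn of $F$ in $\gamma'$ corresponding to the turn $\{\overline{e_i}, e_{i+1}\}$ that $\gamma$ takes. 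Since every consecutive pair is of one of these two types, (2) follows.

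The argument is essentially bookkeeping, so there is no serious obstacle; the points demanding care are fixing the convention that $\gamma'$ denotes the untightened image (so no cancellation has yet occurred and the block structure is literal), keeping the direction-map conventions straight — in particular $DF(\overline{e}) = \overline{(\text{last edge of } F(e))}$ — and using that edge maps are nondegenerate, so that every block genuinely contributes an edge and the two-type dichotomy above is exhaustive.
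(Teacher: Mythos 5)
Your proof is correct and follows essentially the same route as the paper's: both arguments write $\gamma'$ as the untightened concatenation of the paths $F(e_i)$ and observe that turns within a block are interior while turns at the seams are exactly the $DF$-images of turns taken by $\gamma$, hence exterior. Your treatment of part (1) via the block decomposition and your explicit remarks on the degenerate case and the conventions for $DF(\overline{e})$ are harmless refinements of the same computation.
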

	
\begin{proof} Suppose that $F:\Gamma\to\Gamma'$ is as in the lemma statement.

{\bf{(1)}}
 Let $a_1$, $\dots$, $a_n$, $b_1$, $\dots$, $b_m$ be oriented edges of $\Gamma'$ such that $F(a) \coloneqq a_1\dots a_n$ and $F(b) = b_1\dots b_m$.
			Suppose further that $\gamma$ takes $\{a,b\}$. Then $\gamma$, up to reorientation, contains $\overline{a}b$. So $\gamma'$, also up to reorientation, contains $F(\overline{a})F(b) = (\overline{a_n})(\overline{a_{n-1}}) \dots(\overline{a_1})b_1b_2\dots b_m$. In particular, $\gamma'$ takes the turn $\{a_1, b_1\} = \{DF(a), DF(b)\}$ in $\Gamma'$. This completes the proof of (1).
			
			{\bf{(2)}} Suppose $\gamma = e_1e_2\dots e_n$, where $e_1$, $e_2$, $\dots$, $e_n \in E(\G)$, and $F(e_i) = e_{i,1}e_{i,2}\dots e_{i,k_{i}}$.	Then,
			\[\gamma' = F(e_1)F(e_2)\dots F(e_n) = e_{1,1}e_{1,2}\dots e_{1,k_1}e_{2,1}e_{2,2}\dots e_{2,k_2} \dots e_{n,1}e_{n,2}\dots e_{n, k_n}.\]
			Observe that each turn in $\gamma'$ is either of the form $\{\overline{e_{i,j}}, e_{i,j+1}\}$ or $\{\overline{e_{i,k_i}}, e_{i+1, 1}\}$.
			A turn of the form $\{\overline{e_{i,j}}, e_{i,j+1}\}$ is in the image $F(e_i)$ and is therefore an interior turn of $F$ in $\gamma'$.
			For a turn of the form $\{\overline{e_{i,k_i}}, e_{i+1, 1}\}$, we have $\{\overline{e_{i,k_i}}, e_{i+1, 1}\} = \{\overline{DF(e_i)}, DF(e_{i+1})\}$, where $\{\overline{e_i}, e_{i+1}\}$ is a turn taken by $\gamma$. Thus, a turn of the form $\{\overline{e_{i,k_i}}, e_{i+1, 1}\}$ is an exterior turn of $F$ in $\gamma'$. This completes the proof of (2).\\
\end{proof}

\subsection{Folds \& (fold-conjugate) Stallings fold decompositions}{\label{ss:Folds}}

\begin{df}[(Different-length) \& (permissible) folds]{\label{d:folds}}
Let $\G$ be a graph and $\{a,b\}$ a nondegenerate turn in $\G$. We define the different-length fold $\mathfrak{f}$ in $\G$ of $\{a,b\}$ with $a$ longer as follows.
\parpic[r]{\includegraphics[width=2in]{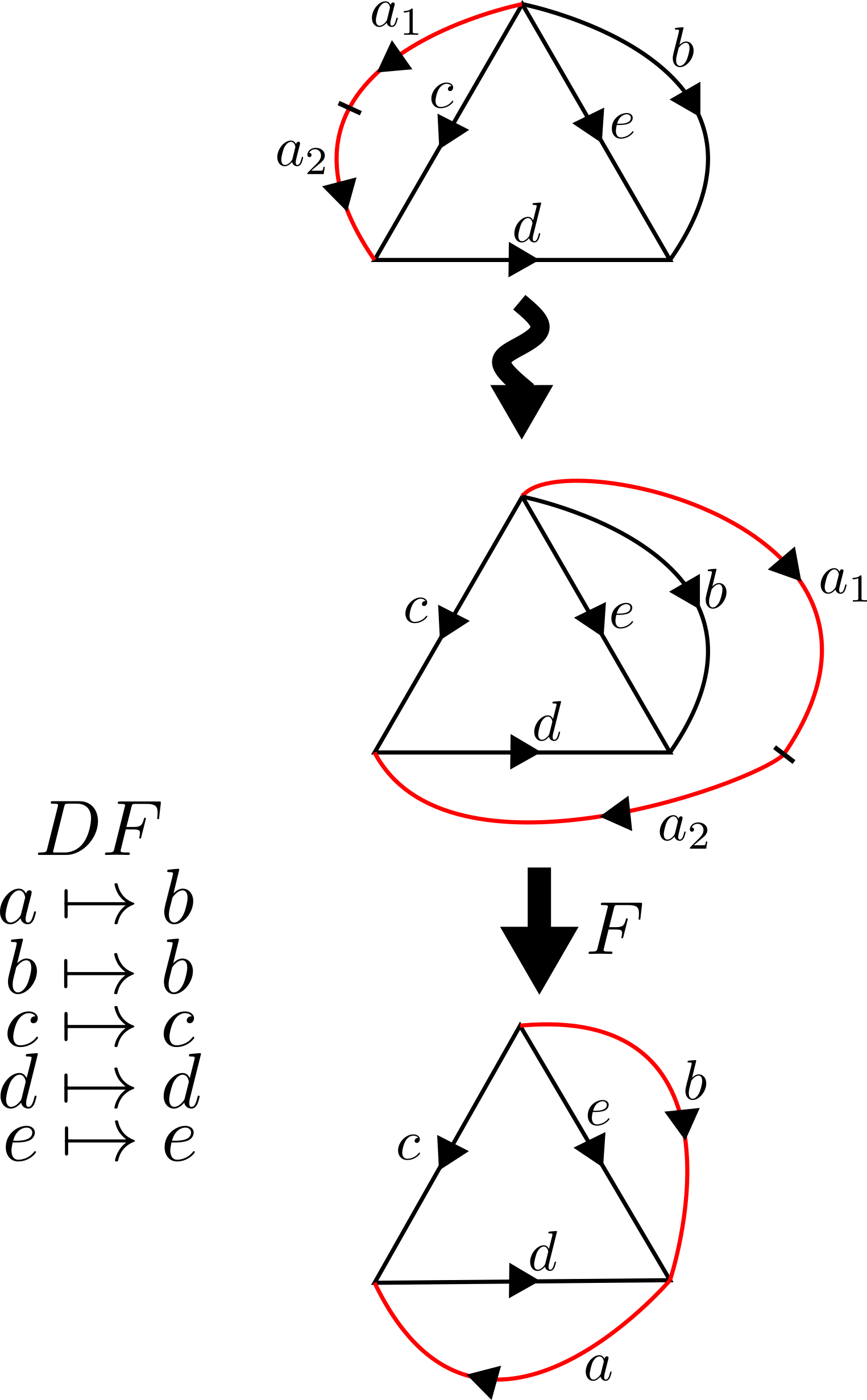}}
		If we subdivide the edge $a$ into $a_1$ and $a_2$, so $a=a_1 a_2$, then by the \textit{different-length fold $\mathfrak{f}$ in $\G$ of $\{a,b\}$ with $a$ longer} we will mean the quotient map of $\G$ defined by identifying the edges $a_1$ and $b$. Note that for the different-length fold $\f$ of $\{a,b\}$ with $a$ longer, if we label the edges in $\f(\G)$ so that $\f(a) = b'a'$, and for all other $e\in E^+(\Gamma)$ with $e\neq a$, we have $\f(e) = e'$, then the direction map will fix all directions apart from sending $a$ to $b'$. A different-length fold is sometimes elsewhere in the literature referred to as a ``proper full fold,'' but we found our terminology more evocative in that, in Culler-Vogtmann Outer space, the different-length fold of $\{a,b\}$ with $a$ longer could only occur if $a$ were longer than $b$. At times we abuse notation and drop the primes.

		Let $\G$ be a high-valence graph, $\{a,b\}$ a turn in $\G$, and $\ell$ a loop in $\G$. We say the different-length fold of $\{a,b\}$ with $a$ longer in $\G$ is \textit{permissible with respect to $\ell$} if $\ell$ does not take the turn  $\{a,b\}$.
	\end{df}
	
\begin{rmk}
		Let $\G$ be a high-valence graph,  $\ell$ a comprehensive loop in $\G$, and $\f$ a permissible fold of $\G$ with respect to $\ell$. Let $\G' \coloneqq \f(\G)$ and $\ell' \coloneqq \f(\ell)$. Since $\f$ is a quotient map, and hence surjective, $\ell'$ is also comprehensive.
	\end{rmk}

\begin{df}[(Fold-conjugate) Stallings fold decompositions]{\label{d:stallingsfolds}}
	Stallings introduced a version of a fold in \cite{s83}. Let $g \colon \Gamma \to \Gamma'$ be a tight graph homotopy equivalence. Let $e_1' \subset e_1$ and $e_2' \subset e_2$ be maximal, initial, nontrivial subsegments of edges $e_1$ and $e_2$ emanating from a common vertex and satisfying: $g(e_1')=g(e_2')$ as edge paths and the terminal endpoints of $e_1'$ and $e_2'$ are distinct points in $g^{-1}(V(\Gamma))$. Redefining $\Gamma$ having vertices at the endpoints of $e_1'$ and $e_2'$ if necessary, one can obtain a graph $\Gamma_1$ by identifying the points of $e_1'$ and $e_2'$ that have the same image under $g$. We call such a fold a \emph{Stallings fold}. Stallings \cite{s83} also showed that if $g \colon \Gamma \to \Gamma'$ is a tight homotopy equivalence, then $g$ factors as a composition of Stalling folds and a final homeomorphism. We call such a decomposition a \emph{Stallings fold decomposition}. It can be obtained as follows: At an illegal turn for $g\colon \Gamma  \to \Gamma'$, one can fold two maximal initial segments having the same image in $\Gamma'$ to obtain a map $\mathfrak{g}_1 \colon \Gamma_1 \to \Gamma'$ of the quotient graph $\Gamma_1$. The process can be repeated for $\mathfrak{g}_1$ and recursively. If some $\mathfrak{g}_k \colon \Gamma_{k-1} \to \Gamma$ has no illegal turn, then $\mathfrak{g}_k$ will be a homeomorphism and the fold sequence is complete.

\[
\xymatrix{\Gamma_0 \ar[r]_{g_1} \ar@/^4pc/[rrrr]_{g=\mathfrak{g}_0} & \Gamma_1 \ar[r]_{g_2} \ar@/^3pc/[rrr]_{\mathfrak{g}_1} & \Gamma_2 \ar[r]_{g_3} \ar@/^2pc/[rr]_{\mathfrak{g}_2} & \dots \ar[r]_{g_n}  & \Gamma_n=\Gamma' \\}
\]

Notice that choices of illegal turns are made in this process and that different choices lead to different Stallings fold decompositions of the same homotopy equivalence.

A \emph{subdivided fold} will mean a fold written as a composition of 2 folds. Suppose $\G_{0} \xrightarrow{g_1} \G_{1} \xrightarrow{g_2} \cdots \xrightarrow{g_{k-1}} \G_{k-1} \xrightarrow{g_k} \G_k$ and $\G'_{0} \xrightarrow{h_1} \G'_{1} \xrightarrow{h_2} \cdots \xrightarrow{h_{n-1}} \G'_{n-1} \xrightarrow{h_n} \G'_n$ are Stallings fold decompositions. We say the decompositions are \emph{fold-conjugate} if, possibly after a fold subdivision of $g_k$ into $\G_{k-1} \xrightarrow{g_k'} \G_{k'} \xrightarrow{g_{k+1}} \G_k$ or $h_n$ into $\G'_{n-1} \xrightarrow{h_n'} \G'_{n'} \xrightarrow{h_{n+1}} \G'_n$, we have for some $j$:\\

\noindent $\G_j \xrightarrow{g_{j+1}} \cdots \xrightarrow{g_k} \G_k=\G_{0} \xrightarrow{g_1} \cdots \xrightarrow{g_j} \G_j $ is
$\G'_{0} \xrightarrow{h_1} \G'_{1} \xrightarrow{h_2} \cdots \G'_{n-1} \xrightarrow{h_n'} \G'_{n'} \xrightarrow{h_{n+1}} \G'_n$ or \\

\noindent  $\G_j \xrightarrow{g_{j+1}} \cdots \xrightarrow{g_k} \G_k=\G_{0} \xrightarrow{g_1} \cdots \xrightarrow{g_j} \G_j $ is
$\G'_{0} \xrightarrow{h_1} \G'_{1} \xrightarrow{h_2} \cdots \xrightarrow{h_{n-1}} \G'_{n-1} \xrightarrow{h_n} \G'_n$ or
\\

\noindent $\G_{0} \xrightarrow{g_1} \cdots \xrightarrow{g_{k-1}} \G_{k-1} \xrightarrow{g_k'} \G_{k'} \xrightarrow{g_{k+1}} \G_k$ is
$\G'_j \xrightarrow{h_{j+1}} \cdots \xrightarrow{h_n} \G'_n=\G'_{0} \xrightarrow{h_1} \cdots \xrightarrow{h_{j}} \G'_{j} $.\\
\end{df}

The following lemma will be used in Section \ref{s:LonelyDirectionProperty} in understanding the structure of the local Whitehead graph for a train track representative of a principal fully irreducible outer automorphism.
	
\begin{lem}{\label{l:foldturns}}
		Let $\Gamma$ be a graph, $\overline{a}$ and $b$ directions at a common vertex of $\Gamma$, and $\ell$ a comprehensive loop in $\Gamma$. Let $\f$ be the different-length fold of $\{\overline{a},b\}$ with $b$ longer. We use the notation for the image of $\f$ established in Definition~\ref{d:folds}: i.e. $\f(b) = \overline{a'}b'$, and for all other $e\in E^+(\Gamma)$ with $e\neq b$, we have $\f(e) = e'$. Let $\Gamma'$ and $\ell'$ denote the $\f$-images of $\Gamma$ and $\ell$ respectively. Then, before tightening,  $\ell'$ takes the turn $\{a',b'\}$ and $\ell'$ takes no other turns containing $b'$.
\end{lem}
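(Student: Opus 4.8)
The plan is to trace through what happens to the loop $\ell$ under the fold $F$ by analyzing turns directly, using Lemma~\ref{l:dirimg}. First I would set up notation carefully: since $F$ is the different-length fold of $\{\overline{a},b\}$ with $b$ longer, we subdivide $b = b_1b_2$ and identify $b_1$ with $a$ (recall that the turn is $\{\overline{a},b\}$, so $a$ and $b$ emanate from a common vertex after we reorient, and the fold glues the initial segment of $b$ onto $a$). Under the labelling convention, $F(b) = \overline{a'}b'$ where $a'$ is the image of $a$ and $b'$ the image of $b_2$, and $F(e) = e'$ for every other positively-oriented edge $e$. The key structural fact is that $DF$ fixes every direction of $\Gamma$ except that $DF(b) = \overline{a'}$; equivalently, the only direction in $\Gamma'$ of the form $b'$ (i.e.\ the initial direction of the ``new'' edge $b'$) is \emph{not} in the image of $DF$ at all.

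Next I would invoke Lemma~\ref{l:dirimg}(2): every turn appearing in $\ell' = F(\ell)$, before tightening, is either an interior turn of $F$ in $\ell'$ or an exterior turn of $F$ in $\ell'$. I treat these two cases separately with respect to which turns can contain the direction $b'$. For exterior turns: an exterior turn of $F$ has the form $\{DF(c), DF(d)\}$ for some turn $\{c,d\}$ taken by $\ell$; since $b'$ is not in the image of $DF$ (the only nontrivially-moved direction $b\mapsto \overline{a'}$ does not produce $b'$, and all other directions are fixed, none of them equal to $b'$ because $b'$ is genuinely new after subdivision), no exterior turn contains $b'$. For interior turns: the interior turns of $F$ in $\ell'$ are those taken by the paths $F(c)$ for $c\in E(\Gamma)$ contained in $\ell$. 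The only edge whose $F$-image is not a single edge is $b$ (and $\overline{b}$), with $F(b) = \overline{a'}b'$, which takes the single turn $\{a', b'\}$; every other $F(c)$ is a single edge and takes no turn. Since $\ell$ is comprehensive, it contains $b$ (in some orientation), so $\{a',b'\}$ does occur as an interior turn, and it is the only interior turn containing $b'$.

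Combining the two cases: the only turn in $\ell'$ (before tightening) containing $b'$ is the interior turn $\{a',b'\}$, and it does occur since $\ell$ traverses $b$. This is exactly the claim. I would also remark on why comprehensiveness is used — precisely to guarantee $b$ (hence the turn $\{a',b'\}$) actually appears — and note that ``before tightening'' is essential since tightening could in principle cancel the occurrence.

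The main obstacle I anticipate is bookkeeping around the subdivision and orientation conventions: making sure that ``the only direction not in the image of $DF$ is $b'$'' is stated and justified precisely, including the reversed-orientation directions $\overline{b'}$, $\overline{e'}$, etc., and confirming that $\overline{b'}$ — which \emph{is} $DF(\overline{b})$ since $F(\overline{b}) = \overline{b'} a'$ — is correctly handled (a turn containing $\overline{b'}$ is a different statement from one containing $b'$, and the lemma only concerns $b'$). The rest is a routine application of Lemma~\ref{l:dirimg} once the direction-map behavior of a different-length fold, already spelled out in Definition~\ref{d:folds}, is in hand.
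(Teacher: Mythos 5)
Your proof is correct and follows essentially the same route as the paper: both establish $\{a',b'\}$ occurs because comprehensiveness forces $\ell$ to traverse $b$, then apply Lemma~\ref{l:dirimg}(2) and rule out any other turn at $b'$ by noting the only interior turn of $F$ is $\{a',b'\}$ and that $b'$ is not in the image of $DF$, so no exterior turn can contain it. Your closing remark distinguishing $b'$ from $\overline{b'}=DF(\overline{b})$ is the right bookkeeping point and is consistent with the paper's argument.
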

	
\begin{proof}
Since $\ell$ is a comprehensive loop, $\ell$ contains $b$. So $\ell'$ contains $\f(b) = \overline{a'}b'$ and thus, pre-tightening, $\ell'$ takes the turn $\{a',b'\}$. Now suppose $\{d_1, d_2\}$ is a turn taken by $\ell'$ pre-tightening. By Lemma~\ref{l:dirimg}, we have that $\{d_1, d_2\}$ is either an interior or exterior turn of $\f$ in $\ell'$. If $\{d_1, d_2\}$ is an interior turn, it must be $\{a', b'\}$, as the quotient map fixes all edges but $b$. So we assume $\{d_1, d_2\}$ is an exterior turn. Since there is no $e\in E(\G)$ such that $D\f(e) = b'$, we then have that $\{d_1, d_2\}$ cannot contain the direction $b'$, completing the proof.\\
\end{proof}

\section{The Rank-3 Lonely Direction Train Track Automaton and Map $F$}

In this section we define the automaton satisfying that each Stallings fold decomposition of each train track representative of each principal fully irreducible outer automorphism is represented by a directed loop in the automaton (see \S \ref{s:tt} and \S \ref{s:MainProof}).\\

\subsection{Sequences of permissible folds and the automaton $\mA(\G,\ell)$ generated by $(\G,\ell)$}
	
	\begin{df}[Sequence of permissible folds]
		Let $\G$, $\G_1$, $\G_2$, $\dots$, $\G_n$ be rank-$r$ high-valence graphs and let $\ell$ be a comprehensive loop in $\G$. Let $\f_1:\G\rightarrow \G_1$,  $\f_2:\G_1\rightarrow \G_2$, $\dots$, $\f_n:\G_{n-1}\rightarrow \G_n$ be folds and $\ell_1 \coloneqq \f_1(\ell)$, $\ell_2 \coloneqq \f_2(\ell_1)$, $\dots$, $\ell_n \coloneqq \f_n(\ell_{n-1})$.
		Suppose $\f_i$ is a permissible fold in $\G_{i-1}$ with respect to $\ell_i$ for each $i$. We then call $\f = (\f_1, \f_2, \dots, \f_n)$ a \textit{sequence of permissible folds in $\G$}, from which we obtain $\G_n$. Abusing notation, we let $\f$ also denote $\f_n \circ \f_{n-1} \circ \dots \circ \f_1$.
	\end{df}

	\begin{lem}{\label{F(ell) tight}}
		Let $\G$ and $\G'$ be high-valence graphs, $\ell$ and $\ell'$ comprehensive loops in $\G$ and $\G'$ respectively, and $\f:\G\to\G'$ a permissible different-length fold with respect to $\ell$ such that $\f(\ell) = \ell'$. Then if $\ell$ is tight, $\ell'$ is tight.
	\end{lem}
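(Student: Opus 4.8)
The plan is to prove directly that the concatenation $\ell' = F(\ell)$ is \emph{already} tight, so that no cancellation ever occurs; the proof will be by contradiction, with Lemma~\ref{l:dirimg}(2) used to pin down exactly where a backtrack in $\ell'$ could come from.

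First I would fix notation as in Definition~\ref{d:folds}: write $F$ as the different-length fold of a nondegenerate turn $\{a,b\}$ in $\G$ with $a$ longer, so that after relabeling $F(a) = b'a'$, $F(e) = e'$ for every positively oriented edge $e \neq a$, and the direction map $DF$ fixes every direction except that $DF(a) = DF(b) = b'$. Two features of this setup will do the work: (i) because the fold is \emph{proper} (``different-length,'' with $a$ strictly longer than $b$), the leftover segment $a'$ is a genuine edge of $\G'$, distinct from $b'$, so the unique turn $\{\overline{b'},a'\}$ taken by the length-$2$ path $F(a)$ is nondegenerate; and (ii) the only pair of distinct directions of $\G$ identified by $DF$ is the folded pair $\{a,b\}$.

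Then I would recall that a loop is tight exactly when it takes no degenerate turn (a subword $e\overline{e}$ is precisely a degenerate turn $\{\overline e,\overline e\}$, and conversely), and argue by contradiction: if $\ell'$ takes a degenerate turn $\{d,d\}$, then by Lemma~\ref{l:dirimg}(2) it is either an interior turn or an exterior turn of $F$ in $\ell'$, before tightening. In the interior case I would observe that $F(e)$ is a single edge --- hence takes no turn at all --- unless $e \in \{a,\overline a\}$, and that $F(a)$ (and $F(\overline a)$) contributes only the turn $\{\overline{b'},a'\}$, which is nondegenerate by (i); this rules out the interior case. In the exterior case I would write $\{d,d\} = \{DF(x),DF(y)\}$ for some turn $\{x,y\}$ taken by $\ell$; tightness of $\ell$ forces $x \neq y$, permissibility of $F$ with respect to $\ell$ forces $\{x,y\} \neq \{a,b\}$, and then (ii) gives $DF(x) \neq DF(y)$, so $\{d,d\}$ is nondegenerate --- again a contradiction. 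Hence $\ell'$ is tight.

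The substance is the reduction via Lemma~\ref{l:dirimg}(2): every turn of $\ell'$ is either manufactured inside $F(a)$ or is the $DF$-image of a turn already present in $\ell$. After that, the interior turns are harmless because a proper fold introduces only the single nondegenerate turn $\{\overline{b'},a'\}$, and the exterior turns are harmless because tightness of $\ell$ removes degeneracy on the source side while permissibility forbids precisely the one turn that $DF$ could collapse. I expect no real obstacle here; the only point requiring a little care --- more bookkeeping than difficulty --- is extracting facts (i) and (ii) cleanly from Definition~\ref{d:folds} (in particular, noting that ``different-length'' is exactly what makes $a'$ a nontrivial edge distinct from $b'$).
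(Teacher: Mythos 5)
Your proposal is correct and follows essentially the same route as the paper's proof: classify each turn of $\ell'$ via Lemma~\ref{l:dirimg}(2) as interior (only $\{a',b'\}$, nondegenerate since $a'\neq b'$) or exterior (the $DF$-image of a turn of $\ell$, nondegenerate by tightness of $\ell$ plus permissibility, since $DF$ only identifies the folded pair). Your merging of the paper's two exterior subcases into the single observation that $DF$ is injective away from $\{a,b\}$ is just a minor streamlining, not a different argument.
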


	\begin{proof}
		Denote by $\{a, \overline{b}\}$ the turn folded by $\f$ and assume, without loss of generality, that $a$ is longer.
		We use the notation for the image of $\f$ established in Definition~\ref{d:folds}. That is $\f(a) = \overline{b'}a'$, and for each other $e\in E^+(\Gamma)$ with $e\neq b$, we have $\f(e) = e'$. Let $\{d_1', d_2'\}$ be an arbitrary turn taken by $\ell'$. For $\ell'$ to be tight it suffices to show $d_1'\neq d_2'$. By Lemma~\ref{l:dirimg},
		$\{d_1', d_2'\}$ is either an interior or exterior turn of $\f$. We examine the cases separately:\\

		\vspace{-0.9em}
		{\bf{Case 1 ($\{d_1', d_2'\}$ is an interior turn):}}
		Since all edges in $E^+(\G)$ except $a$ are fixed by $\f$ (but for the addition of primes), the interior turn $\{d_1', d_2'\}$ must be taken by $\f(a) = \overline{b'}a'$. Thus, $\{d_1', d_2'\} = \{a',b'\}$. Thus, it suffices to show $a' \neq b'$.
		
		Suppose for the sake of contradiction that $a' = b'$. Subdivide $a$ into $a_1a_2$ with $\f(a_1) = \overline{b'}$ and $\f(a_2) = a'$. Then $\f^{-1}(\{a'\}) = \{a_2\}$ and $\f^{-1}(\{\overline{b'}\}) = \{a_a, \overline{b}\}$. By construction, $a_1 \neq a_2$. Moreover, $\f$ is a fold of $a = a_1a_2$ and $\overline{b}$, so $a_1$ and $\overline{b}$  must be distinct directions. Thus, $a' \neq b'$, concluding the proof in the case where $\{d_1', d_2'\}$ is an interior turn.\\

		\vspace{-0.9em}
		{\bf{Case 2 ($\{d_1', d_2'\}$ is an exterior turn):}}
		If $\{d_1', d_2'\}$ is an exterior turn, then $\{d_1', d_2'\} = \{D\f(d_1), D\f(d_2)\}$ for some turn $\{d_1, d_2\}$ taken by $\ell$. We examine subcases:\\

		\vspace{-0.9em}
		{\bf{Case 2a ($d_1 \neq a$ and $d_2\neq a$):}}
		Since $\ell$ is tight, $d_1 \neq d_2$. Since $\f$ fixes all directions except for $a$ (but for the addition of primes), the directions $d_1$ and $d_2$ must also be fixed under $\f$ (but for the addition of primes). Thus, $D\f(d_1) \neq D\f(d_2)$. Since $\{d_1', d_2'\} = \{D\f(d_1), D\f(d_2)\}$, we have $d_1' \neq d_2'$, as desired.\\

		\vspace{-0.9em}
		{\bf{Case 2b ($d_1 = a$ or $d_2 = a$):}}
		Without loss of generality, we assume $d_1 = a$. Then $d_1' = D\f(d_1)=D\f(a) = \overline{b'}$. To show $d_1' \neq d_2'$, it suffices to show $d_2' = D\f(d_2) \neq \overline{b'}$.
		
		Since $d_2 \in D\f^{-1}(\{d_2'\})$, it suffices to show $d_2\notin D\f^{-1}(\{\overline{b'}\})$. Suppose for the sake of contradiction that $d_2 \in D\f^{-1}(\{\overline{b'}\})$. Now $D\f^{-1}(\{\overline{b'}\}) = \{a, \overline{b}\}$. Further, since $\ell$ is tight and $\{d_1, d_2\}$ is taken by $\ell$ and $d_1 = a$, we have $d_2 \neq a$. So, if we had $d_2 \in \{a, \overline{b}\}$, we would need $d_2 = b$. So suppose $d_2 = b$. Then $\ell$ would take the turn $\{d_1, d_2\} = \{a, \overline{b}\}$. But $\f$ is a fold of $\{a,\overline{b}\}$ and is permissible with respect to $\ell$, a contradiction. Thus, $d_2 \neq \overline{b}$, as desired.\\
	\end{proof}

	\begin{lem}{\label{l:F local inj}}
		Let $\G$ be a high-valence graph, $\ell$ a tight comprehensive loop in $\G$, and $\f = (\f_1, \f_2, \dots, \f_n)$ a sequence of permissible folds in $\G$. Then for each $e\in E(\G)$, we have that $\f(e)$ is tight.
	\end{lem}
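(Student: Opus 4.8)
The plan is to induct on the length $n$ of the sequence of permissible folds. The base case $n=0$ is trivial: $F$ is the identity and $F(e)=e$ is tight because $\ell$ is (and in any case a single edge is trivially tight). For the inductive step, write $F = (F_1, \dots, F_n)$ and set $F' = (F_1, \dots, F_{n-1})$, so $F = F_n \circ F'$. Let $\G_{n-1} = F'(\G)$ and $\ell_{n-1} = F'(\ell)$. By the inductive hypothesis, $F'(e)$ is tight for every $e \in E(\G)$; applying this to the edges traversed by $\ell$ and using that $\ell$ is tight and comprehensive, one gets (via Lemma~\ref{F(ell) tight}, applied iteratively along the sequence) that $\ell_{n-1}$ is a tight comprehensive loop in $\G_{n-1}$.

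Now I would analyze the single fold $F_n \colon \G_{n-1} \to \G_n$, which is a permissible different-length fold with respect to $\ell_{n-1}$. Fix $e \in E(\G)$ and let $\rho \coloneqq F'(e)$, a tight path in $\G_{n-1}$; we must show $F_n(\rho)$ is tight. By Lemma~\ref{l:dirimg}(2), every turn taken by $F_n(\rho)$ (before tightening) is either an interior or an exterior turn of $F_n$ in $F_n(\rho)$. The interior turns are taken by $F_n$-images of edges; since $F_n$ fixes every positively oriented edge except the subdivided one (up to primes), the only interior turn is $\{a',b'\}$ in the notation of Definition~\ref{d:folds}, and this is nondegenerate exactly as shown in Case 1 of the proof of Lemma~\ref{F(ell) tight}. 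For an exterior turn $\{DF_n(d_1), DF_n(d_2)\}$ coming from a turn $\{d_1,d_2\}$ taken by $\rho$: since $\rho$ is tight we have $d_1 \neq d_2$, and the argument splits exactly as in Cases 2a and 2b of Lemma~\ref{F(ell) tight} — if neither $d_i$ is the folded direction then $DF_n$ is injective on them, and if (say) $d_1 = a$ then one must rule out $d_2 = b$, i.e. that $\rho$ takes the folded turn $\{a,\overline b\}$.

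The one genuinely new point — and the step I expect to be the main obstacle — is precisely this last ruling-out: permissibility of $F_n$ only tells us that $\ell_{n-1}$ does not take the folded turn, not that the individual edge-image paths $\rho = F'(e)$ avoid it. To bridge this gap I would observe that every turn taken by $\rho = F'(e)$ is an interior turn of $F'$ in $\G_{n-1}$ (being taken by the $F'$-image of an edge), hence is among the interior turns of $F'$; and every such interior turn is taken by $\ell_{n-1}$, because $\ell$ is comprehensive (so $\ell$ traverses $e$, hence $\ell_{n-1} = F'(\ell)$ contains $F'(e) = \rho$ as a subpath, so $\ell_{n-1}$ takes every turn $\rho$ takes) and tightening $\ell_{n-1}$ does not reintroduce the folded turn since $\ell_{n-1}$ is already tight. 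Thus any turn taken by $\rho$ is taken by the tight loop $\ell_{n-1}$, which by permissibility does not take $\{a,\overline b\}$; this supplies the missing input and the Case 2b argument goes through verbatim. Assembling the three cases shows $F_n(\rho) = F_n(F'(e)) = F(e)$ is tight, completing the induction.
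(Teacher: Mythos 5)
Your argument is correct and rests on the same key idea as the paper's own proof: since $\ell$ is comprehensive, the image of $e$ sits inside the image of $\ell$, and Lemma~\ref{F(ell) tight} applied fold-by-fold keeps that loop image tight, so permissibility is inherited by the turns of $F'(e)$. The paper simply stops there---$F(e)$ is a subpath of the tight loop $F(\ell)$, hence tight---whereas your induction plus the re-run of the interior/exterior turn analysis on the path $F'(e)$ re-proves the single-fold step; this is valid but redundant, since once you know $F'(e)$ is a subpath of $\ell_{n-1}$ you could conclude directly that $F_n(F'(e))$ is a subpath of the tight loop $\ell_n$.
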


	\begin{proof}
		Since $\ell$ is comprehensive, $\ell$ contains $e$. Applying $\f$ to $\ell$ consists of successively applying folds to $\ell$. Since, by Lemma~\ref{F(ell) tight}, a permissible fold with respect to a loop preserves its tightness, it follows that $\f(\ell)$ is tight. Therefore, $\f(e)$ is tight.\\
	\end{proof}

	\begin{df}[Automaton $\mA(\G,\ell)$ generated by $(\G,\ell)$]
		Let $\G$ be a high-valence graph, $\ell$ a comprehensive loop in $\G$, and $\mF$ the set of all sequences of permissible folds in $\G$. Let
		\[\mG = \{(\G_i, \ell_i) \mid \G_i = \f(\G), \ell_i = \f(\ell)\text{ for some } \f \in \mF\}\]
		be the set of tuples obtained by applying all possible sequences of folds to $\G$ and $\ell$. Let $\mE \subseteq \mG^2$ be the set such that $((\G_i, \ell_i),(\G_i', \ell_i')) \in \mE$ precisely when there exists a permissible fold $\f: \G_i \to \G_i'$ with respect to $\ell_i$ such that the set of turns taken by $\ell_i'$ is equal to the set of turns taken by $\f(\ell_i)$, modulo primes.
		
		We consider two elements $(\G_i, \ell_i)$ and $(\G_j, \ell_j)$ of $\mG$ equivalent if the following are satisfied:
		\begin{itemize}
			\item there exists an ornamentation-preserving graph isomorphism $\sigma$ from $\G_i$ to $\G_j$ and
			\item the set of turns taken by $\ell_i$ can be identified via $\sigma$ with the set of turns taken by $\ell_j$.
		\end{itemize}
		
		Then \textit{the automaton generated by $(\G,\ell)$}, denoted $\mathcal{A}(\G,\ell)$, is the directed graph $(\mG, \mE)$.\\
	\end{df}

\subsection{The Lonely Direction Property}\label{s:LonelyDirectionProperty}

We introduce in this section a property held by the states (and preserved within) the automata we construct.
	
	\begin{prop}
		Let $\G$ be a $5$-edged rank-$3$ high-valence graph. Then $\G$ has exactly 3 vertices: two valence-3 vertices and one valence-4 vertex.
	\end{prop}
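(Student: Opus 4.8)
The plan is to run a straightforward Euler-characteristic and handshake-lemma count. Since $\G$ is a rank-$3$ graph it is connected with first Betti number $3$, so $\#E^+(\G) - \#V(\G) + 1 = 3$; with $\#E^+(\G) = 5$ this forces $\#V(\G) = 3$. That pins down the number of vertices immediately; the remaining work is to determine the valences.

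Next I would invoke the handshake lemma: the sum of the valences of the vertices equals $2\cdot\#E^+(\G) = 10$. Since $\G$ is high-valence, each of the three vertices has valence $\geq 3$, so the valence sequence is a triple of integers each at least $3$ summing to $10$. The only such triples (up to reordering) are $(3,3,4)$. Hence two vertices have valence $3$ and one has valence $4$, which is exactly the claim. One should perhaps note as a sanity check that a valence sequence like $(3,3,4)$ is indeed realizable by a rank-$3$ graph on $5$ edges (e.g. take the theta-graph analogue with an extra edge), though realizability is not logically needed for the stated proposition — we are told $\G$ is such a graph and are only extracting consequences.

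I would also address the mild subtlety that the definitions in the paper allow graphs with separately oriented edges and the term ``graph'' could in principle permit things like loop-edges (both endpoints at one vertex, contributing $2$ to that vertex's valence) or multi-edges; none of this affects the count, since the handshake lemma and the Euler-characteristic identity hold regardless of loops or parallel edges, and the high-valence hypothesis is what rules out valence-$1$ and valence-$2$ vertices that a naive forest-like graph might have. So no case analysis on the graph's combinatorial type is actually required.

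The main (and frankly only) obstacle is purely bookkeeping: making sure the rank/Betti-number normalization is the one in force (connected graph, $b_1 = r = 3$) and that ``valence'' is counted with the orientation-doubling convention of the paper's definition of $E(\G)$ versus $E^+(\G)$, so that the handshake total is $2\cdot\#E^+(\G) = 10$ and not $5$. Once those conventions are fixed, the two displayed arithmetic facts ($3 = 5 - \#V + 1$ and $\sum \mathrm{val} = 10$ with each summand $\geq 3$) close the argument.
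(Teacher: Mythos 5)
Your proof is correct, and it takes a slightly more streamlined route than the paper's. You pin down the vertex count in one stroke via the Euler characteristic identity for a connected graph, $b_1(\G)=\#E^+(\G)-\#V(\G)+1$, so that $5-\#V(\G)+1=3$ forces $\#V(\G)=3$; the paper instead runs a case analysis, ruling out $1$ vertex (a rank-$5$ rose), $2$ vertices (any $5$-edged two-vertex graph has rank $4$), and $\geq 4$ vertices (the valence sum $10$ would force a vertex of valence $<3$), before concluding there are exactly $3$ vertices. The two arguments are morally the same computation -- the paper's small-vertex cases are just the Euler characteristic checked by hand -- but your version is shorter and handles all vertex counts uniformly, at the cost of explicitly invoking connectivity (which the paper also uses implicitly, and which is automatic for the marked graphs of Outer space under consideration). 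From $\#V(\G)=3$ onward, your handshake-lemma step (valences $\geq 3$ summing to $2\cdot 5=10$, hence $(3,3,4)$) coincides with the paper's final step, and your remarks that loops and parallel edges do not disturb either identity correctly dispose of the only conventions that could cause trouble.
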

	
	\begin{proof}
		Note that all vertices of $\G$ must have valence $\geq 2$, and that, since there are $5$ edges, the sum of the valences of all the vertices must be $10$.
		
		Suppose $\G$ were a rank-$3$ graph with $5$ edges and $1$ vertex. This would be a rank-$5$ rose, contradicting that $\G$ has rank $3$.
		
		Now suppose $\G$ was a rank-$3$ graph with $5$ edges and $2$ vertices. One may partition the $5$ edges into edges that connect the $2$ vertices and edges that form loops on one of the vertices. The graph with $2$ vertices and $5$ edges connecting the vertices has rank $4$. Since replacing an edge connecting the vertices with an edge forming a loop at a vertex does not change the rank, $\G$ also cannot have this form.
		
		Now suppose $\G$ was a rank-$3$ graph with $5$ edges and $\geq 4$ vertices. Then, by the valence sum, there would be a vertex with valence $< 3$, but we said $\G$ had to be of high valence.
		
		Thus, $\G$ must have precisely $3$ vertices. Since each vertex must have valence $\geq 3$, the valences of the vertices must be $3$, $3$, and $4$.\\
	\end{proof}

	\begin{df}[Lonely Direction Property]
		Let $\G$ be a $5$-edged rank-$3$ graph, $\ell$ a comprehensive loop in $\G$ taking all but two turns of $\G$, and $v$ the valence-$4$ vertex of $\G$. If there exists a particular direction $d$ at $v$ that is contained in the only two turns not taken by $\ell$, then we say $(\G, \ell)$ satisfies the \textit{Lonely Direction Property with distinguished direction $d$}.
	\end{df}

	\begin{prop}\label{p:LoneDirectionPropertyPreserved}
		Let $\G$ be a $5$-edged rank-$3$ graph, and $\ell$ a comprehensive loop in $\G$ such that $(\G, \ell)$ satisfies the \LDP. Then, for all $5$-edged graphs $\G_i$ in $\mA(\G,\ell)$, letting $\ell_i$ denote image of $\ell$ in $\G_i$, we have that $(\G_i, \ell_i)$ satisfies the \LDP.
	\end{prop}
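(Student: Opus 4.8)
The plan is to show that each of the two types of edges in the automaton's transitions — the interior/exterior turn data encoded by $\mE$ and the graph-isomorphism data encoded by the equivalence relation — preserves the Lonely Direction Property, and then conclude by induction along directed paths out of $(\G,\ell)$. Since every $5$-edged graph $\G_i$ in $\mA(\G,\ell)$ is reached from $(\G,\ell)$ by a sequence of permissible folds, it suffices to prove: (a) the property is invariant under the ornamentation-preserving isomorphisms used to identify states; and (b) if $(\G_{i-1},\ell_{i-1})$ satisfies the \LDP{} with distinguished direction $d$, and $F\colon \G_{i-1}\to\G_i$ is a permissible different-length fold with respect to $\ell_{i-1}$ such that $\G_i$ is again $5$-edged, then $(\G_i,\ell_i)$ satisfies the \LDP. (Note a fold of a $5$-edged graph could a priori land on a graph with fewer edges if a valence drops; the hypothesis restricts us to those $\G_i$ that remain $5$-edged, so $\G_i$ is again a high-valence $5$-edged rank-$3$ graph with the same vertex-valence profile $3,3,4$ by the preceding Proposition.)

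Part (a) is immediate: an ornamentation-preserving isomorphism carries the valence-$4$ vertex to the valence-$4$ vertex, carries directions to directions, carries turns to turns, and carries the turns taken by $\ell_{i-1}$ bijectively to those taken by $\ell_{i-1}'$; so the two not-taken turns and their common distinguished direction are transported verbatim. For part (b), the key input is Lemma~\ref{l:foldturns}: writing the permissible fold as a different-length fold of a turn $\{\overline{a},c\}$ with (say) $c$ longer, so that $F(c)=\overline{a'}c'$ and all other positive edges are fixed up to primes, the lemma tells us that \emph{before tightening} $\ell_i$ takes the turn $\{a',c'\}$ and takes no other turn containing $c'$. I would first argue that no tightening actually occurs here, since by Lemma~\ref{F(ell) tight} the loop $\ell_i=F(\ell_{i-1})$ is already tight (as $\ell_{i-1}$ is). Then I count turns: $\G_i$ has the same number of edges, hence (being high-valence rank-$3$ with valences $3,3,4$) the same total number of turns as $\G_{i-1}$, namely $\binom{3}{2}+\binom{3}{2}+\binom{4}{2}=3+3+6=12$; and I must show $\ell_i$ takes exactly $10$ of them, with the two missing ones sharing a common direction $d'$ at the valence-$4$ vertex.

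The heart of the argument is a bookkeeping lemma tracking which turns of $\G_{i-1}$ taken by $\ell_{i-1}$ map to which turns of $\G_i$ taken by $\ell_i$ under $DF$. Directions at vertices other than the ones involved in the fold are fixed (up to primes), and by Lemma~\ref{l:dirimg}(1) a taken turn maps to a taken turn; conversely, by Lemma~\ref{l:dirimg}(2) every turn taken by $\ell_i$ is either the single interior turn $\{a',c'\}$ or an exterior turn $\{DF(d_1),DF(d_2)\}$ coming from a taken turn $\{d_1,d_2\}$ of $\ell_{i-1}$. So the not-taken turns of $\G_i$ are exactly the $DF$-images of not-taken turns of $\G_{i-1}$, \emph{together with} whatever new non-$\ell_i$-taken turns are created at the folded vertex because $c'$ is a "fresh" direction appearing in only one turn of $\ell_i$. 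Since $DF$ is a bijection on directions at the valence-$4$ vertex in the typical case (the fold subdivides one edge and reattaches, but valence $4$ is unchanged, so the four directions there just get relabeled, with $c\mapsto \overline{a'}$ when $c$ emanates from $v$, or with the valence-$4$ structure untouched when it does not), the two not-taken turns of $\G_{i-1}$, which share $d$, get carried to two turns of $\G_i$ sharing $DF(d)$. I then check that $\{a',c'\}$ being the unique $\ell_i$-turn on $c'$ forces exactly the right count: the turns on $c'$ other than $\{a',c'\}$ are not taken, but these must coincide with the $DF$-images of the two old not-taken turns — this is where the \LDP{} hypothesis does real work, since it is precisely the statement that in $\G_{i-1}$ the direction whose turns are mostly untaken is pinned down, and a case analysis on whether the distinguished direction $d$ equals $\overline{a}$, $\overline{c}$, or a direction untouched by the fold shows the distinguished direction simply transports to $DF(d)$ (one must verify it lands at the valence-$4$ vertex, using that $F$ preserves the valence profile).

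I expect the main obstacle to be this last case analysis: one must rule out the scenario where the fold "uses up" the distinguished direction's loneliness — e.g. where the folded turn $\{\overline{a},c\}$ is one of the two untaken turns and folding it somehow creates a graph in which the two untaken turns no longer share a vertex or no longer share a direction. Permissibility ($\ell_{i-1}$ does not take $\{\overline{a},c\}$) means the folded turn \emph{is} one of the untaken turns, so by the \LDP{} it contains $d$; tracking $d$ through $DF$ and confirming $c'$'s new turns exactly account for the second untaken turn is the delicate step. I would handle it by splitting into the sub-case where $d$ is the distinguished direction at the folded vertex versus $d$ at the opposite end, and in each sub-case exhibit the two untaken turns of $\G_i$ explicitly and verify they meet at the valence-$4$ vertex in a common direction. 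Everything else — the turn count, the tightness, the isomorphism-invariance — is routine given Lemmas~\ref{l:dirimg}, \ref{l:foldturns}, and \ref{F(ell) tight}.
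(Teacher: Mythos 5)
There is a genuine gap, and it lies exactly at the step you flag as the heart of the argument. Your transport mechanism for the distinguished direction is wrong. Since both untaken turns lie at the valence-$4$ vertex $v$ and contain the distinguished direction $d$, every permissible fold is a fold of one of these two untaken turns, so both folded directions lie at $v$; consequently $DF$ is \emph{not} a bijection on the directions at $v$ (it identifies the two folded directions), and the case ``the valence-$4$ structure is untouched'' never occurs. Moreover the two old untaken turns do not get ``carried to two turns of $\G_i$ sharing $DF(d)$'': the folded untaken turn has degenerate $DF$-image, and the other untaken turn's $DF$-image is actually \emph{taken} by $\ell_i$, because its partner turn under the identification $DF(a)=DF(b)$ was taken by $\ell_{i-1}$ (e.g.\ if $a$ is distinguished and $\{a,c\}$ is untaken, then $\{b,c\}$ is taken and $DF(\{b,c\})=DF(\{a,c\})$). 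So the new untaken turns are not $DF$-images of old untaken turns at all, and $DF(d)$ is not the new distinguished direction; your bookkeeping, if carried out, would produce the wrong count and the wrong direction.

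What actually happens (and what the paper's proof establishes) is that the new distinguished direction is the \emph{fresh} direction $b'$ arising from the interior of the subdivided longer edge, which by Lemma~\ref{l:foldturns} appears in exactly one turn taken by $\ell_i$; and the valence-$4$ vertex generically \emph{moves}: when the shorter folded edge $a$ is not a loop, the old valence-$4$ vertex drops to valence $3$ (its two folded directions are identified) and the far endpoint of $a$ becomes the new valence-$4$ vertex, carrying $b'$ and the three old directions there, all of whose mutual turns are taken (Lemma~\ref{l:dirimg}); when $a$ is a loop, the valence-$4$ vertex stays put but the distinguished direction is still the fresh $b'$, not $DF(d)$. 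You did correctly identify the key inputs --- permissibility forces the folded turn to be untaken and hence to contain $d$, and Lemma~\ref{l:foldturns} makes the fresh direction ``lonely'' --- but the claim that the valence profile is preserved with the four directions at the valence-$4$ vertex merely relabeled, and that the untaken turns and $d$ transport via $DF$, is false and cannot be repaired without replacing it by the two-case analysis (shorter folded edge a loop or not) that tracks where the valence-$4$ vertex and the fresh direction end up. As a minor point, your worry that a different-length fold might reduce the number of edges is unfounded: such folds preserve the number of edges and vertices, so all states reached in $\mA(\G,\ell)$ by these folds are automatically $5$-edged.
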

	
	\begin{proof}
		Let $v$ be the valence-4 vertex of $\G$ and $u,w$ the valence-3 vertices. Note that all permissible folds in $\G$ are at $v$, since the only two turns not taken by $\ell$ are at $v$. Suppose that $\f\colon \G \to \G'$ is a permissible fold of the turn $\{a,b\}$ at $v$ with $b$ longer and that either $a$ or $b$ is the distinguished direction. As in Definition~\ref{d:folds}, subdivide $b$ into $b_1b_2$ so that $\f(a) = \f(b_1)$. Call $\f(a)$ by $a'$ (so that also $\f(b_1) = a'$) and $\f(b)$ by $b'$. Then $D\f(a) = D\f(b) = a'$. Let $\f(\ell):=\ell '$ and let $v'$, $u'$, and $w'$ denote the respective $\f$-images of $v$, $u$, and $w$.
		
		The valence of $v'$ will be either 3 or 4. Specifically, the valence of $v'$ is 4 if $a$ is a loop and is 3 otherwise. We consider separately:

		\vskip 5pt
				
		{\bf{Case 1:}} $v'$ has valence 3~~~and~~~{\bf{Case 2:}} $v'$ has valence 4.
		
		\vskip 5pt
		
		\noindent {\bf{Case 1 ($v'$ has valence 3):}} We aim to show that the valence-4 vertex of $\G'$ contains all turns not taken by $\ell'$, and each of these turns contains a common distinguished direction.
	\parpic[r]{\includegraphics[width=3in]{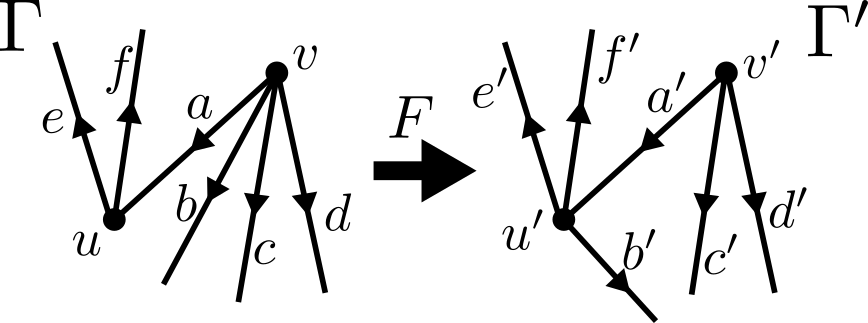}}	
		Suppose the directions at $v$ are $\{a,b,c,d\}$, i.e. $c$ and $d$ are the $2$ directions not involved in the fold. Then letting $c' \coloneqq D\f(c)$ and $d' \coloneqq D\f(d)$, the directions at $v'$ are $\{a',c',d'\}$.
		Then $\ell '$ takes all turns at $v'$, as follows. By Lemma~\ref{l:dirimg}, $\ell '$ takes $\{a', c'\}$ and $\{a',d'\}$ as follows. Either $\ell$ took $\{b,c\}$ and $\{b,d\}$ if $a$ was the distinguished direction or $\{a,c\}$ and $\{a,d\}$ if $b$ was the distinguished direction. Moreover, $\ell '$ takes $\{c',d'\}$, because $\ell$ took $\{c,d\}$.
		
		Suppose, without loss of generality, that the vertex $u$ of $\G$ contains the direction $\bar{a}$. Note that $v$ cannot contain $\bar{a}$, or $a$ would be a loop and $v'$ would have valence 4.
		Let $e$ and $f$ denote the other directions at $u$. Let the images of $u$, $\bar{a}$, $e$, and $f$ under $\f$ be $u'$, $\bar{a}'$, $e'$, and $f'$ respectively. The directions at $u'$ are then $\{\bar{a}', e', f', b'\}$ (see image above), where $b'$ is the image of the portion of $b$ not identified with $\bar{a}$, i.e. $b_2$.
		
		By Lemmas~\ref{l:dirimg} and \ref{l:foldturns}:	
		
		$\bullet$ Since $\ell$ takes all turns at $u$, we have $\ell '$ takes all turns among $\{\bar{a}', e', f'\}$ and
		
		$\bullet$ $\ell'$ takes the turn $\{\bar{a}',b'\}$ and
		
		$\bullet$ $\ell'$ takes no other turns at $u'$ involving $b'$.
		
		\noindent That is, all turns at $u'$ that $\ell'$ does not take involve $b'$.
		
		Also by Lemma~\ref{l:dirimg}, since $\ell$ takes all turns at $w$ and the directions at $w$ are not involved in the fold $\f$, we have that $\ell'$ takes all turns at $w'$.
		
		This completes the proof of Case 1 with $u'$ as the valence-4 vertex of $\G'$ and $b'$ as the distinguished direction.
		
		\vskip 5pt
		
		\noindent {\bf{Case 2 ($v'$ has valence 3):}}  We aim to show that the valence-4 vertex of $\G'$ contains all of the turns not taken, and each of these turns contains a common distinguished direction.\\
\vspace{-1.4em}
\parpic[r]{\includegraphics[width=1in]{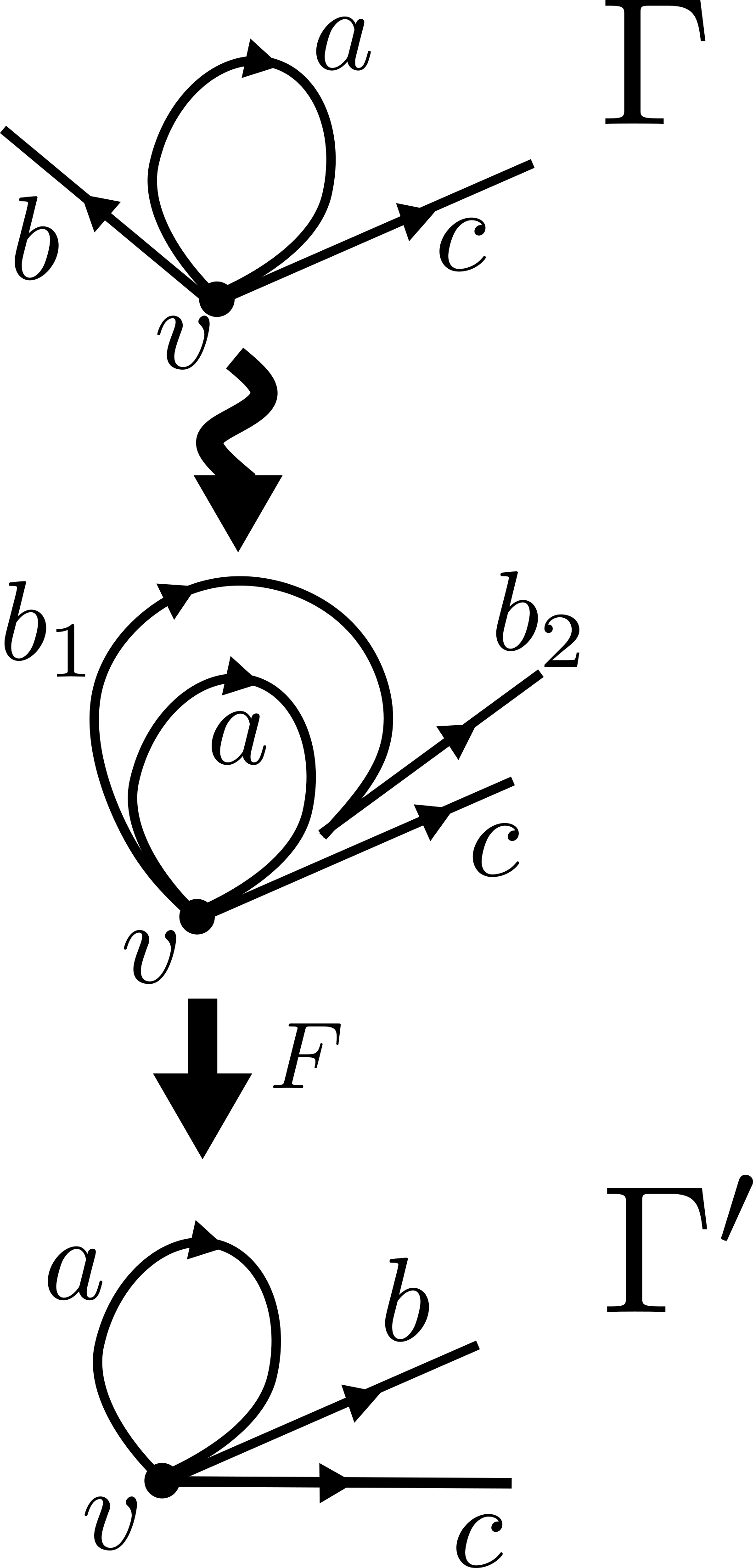}}		
		In this case, the edge $a$ must be a loop at $v$. Note that the $\f$-image of $a$ will also be a loop. We let $c$ denote the remaining direction at $v$ and $c'$ denote $\f(c)$. At $v$, $\ell$ either takes $\{b,\bar{a}\}$, $\{b,c\}$, and $\{\overline{a}, c\}$, if $a$ is the distinguished direction, or $\{a,\bar{a}\}$, $\{a,c\}$, and $\{\overline{a}, c\}$, if $b$ is the distinguished direction.
		Thus, by Lemma~\ref{l:dirimg}, $\ell'$ takes $\{a', \overline{a'}\}$, $\{a', c'\}$, and $\{\overline{a'}, c'\}$. That is, $\ell'$ takes all turns among the directions $\{a', \overline{a'}, c'\}$. Moreover, by Lemma~\ref{l:foldturns}, $\ell'$ takes the turn $\{\overline{a'}, b'\}$ and no other turns at $v'$ involving $b'$. That is, all turns that $l'$ does not pass over at $v'$ involve $b'$.
		Further, since $u$ and $w$ were not involved in any folds under $\f$ and $\ell$ takes all turns at $u$ and $w$, by Lemma~\ref{l:dirimg}, all turns at $u'$ and at $w'$ are taken by $\ell'$. This completes Case 2 with $v'$ as the valence-4 vertex of $\G'$ and $b'$ as the distinguished direction.
		
		Inductively, all 5-edged graphs $\G_i$ in $\mA(\G,\ell)$ must also have the same structure as $\G'$. In particular, $\G_i$ must have a distinguished direction $b$ at the valence-4 vertex such that the image of $\ell$ takes all turns in $\G_i$ except for precisely two turns involving $b$.\\
	\end{proof}

\subsection{Rank-$3$ lonely direction automaton algorithms \& construction}\label{ss:LDAalgs}

The rank-$3$ lonely direction automaton $\mA_3$ is constructed as follows (the code can be found at \cite{g20b}).

		\vskip 5pt

To more generally construct $\mA(\G,\ell)$ for a high-valence graph $\G$ and comprehensive loop $\ell$ in $\G$:
\begin{enumerate}
  \item For each nondegenerate turn $\{d_{i,1},d_{i,2}\}$ in $\G$ such that $\ell$ does not take $\{d_{i,1},d_{i,2}\}$,
	let $\mF_i$ denote the set containing the different-length fold $\{d_{i,1},d_{i,2}\}$ with $d_{i,1}$ longer and the different-length fold $\{d_{i,1},d_{i,2}\}$ with $d_{i,2}$ longer.
	Note that, letting $\mF = \cup_{i} \mF_i$, we have that $\mF$ is the set of all different-length permissible folds of $\G$ with respect to $\ell$.
  \item Let $S_0 \coloneqq (\G, \ell)$.
  \item Given $S_k$, use (1) with inputs $\G_k$ and $\ell_k$ to determine all permissible folds $\mF^k = (\f_{k,1},\f_{k,2},\dots,\f_{k,n_k})$ on $\G_k$ with respect to $\ell_k$.
  \item Perform each fold $\f_{k,j}$ on $\G_k$ and $\ell_k$ to obtain $(\f_{k,j}(\G_k), \f_{k,j}(\ell_k))$. Let
      $$E_{k,j} = ((\G_k, \ell_k), (\f_{k,j}(\G_k), \f_{k,j}(\ell_k))) \text{ and } S_m = (\f_{k,j}(\G_k), \f_{k,j}(\ell_k)),$$
       where $m$ is one more than the maximum of the indices $k$ used thus far.
  \item Let $S = \cup_k S_k $ and $E = \cup_{k,j} E_{k,j}$.
\end{enumerate}
It is not difficult to see that the directed graph $(S, E)_{(\G, \ell)}$ is $\mA(\G, \ell)$.\\

\begin{df}[The rank-$3$ lonely direction automaton $\mA_3$]
We define $\mA_3$ is the maximal strongly connected connected components of $\cup \mA(\G, \ell)$ as $(\G, \ell)$ varies over all pairs $(\G, \ell)$ such that $\G$ is a high-valence graph possessing the Lonely Direction Property with respect to the comprehensive loop $\ell$.
\end{df}

\begin{rmk}
$\mA_3$ is obtained by taking $\cup (S,E)_{(\G, \ell)}$ for suitable $(\G, \ell)$ as above and then taking the maximal strongly connected components.\\
\end{rmk}

\subsection{The rank-$3$ lonely direction automaton}\label{ss:LDA}

Consider $\mG$ to be the set of graphs in the two representations below, but with the inner embedded (colored) graphs replaced with vertices. Each graph $\G_i\in\mG$ has associated with it a comprehensive loop $\ell_i$ such that $\ell_i$
\begin{itemize}
\item takes each turn at each valence-$3$ vertex of $\Gamma_i$ and
\item takes a turn $\{d_1, d_2\}$ at the valence-$4$ vertex of $\G_i$ if and only if $\{d_1, d_2\}$ is an edge on the (colored) graph depicted in the valence-$4$ vertex of $\G_i$ in the figures.
\end{itemize}

Then, $(\G_i, \G_j)$ is an edge in the figures if and only if the following are satisfied:\\
$\bullet$ There exists a permissible fold $\f_{i,j}$ such that $(\f_{i,j}(\G_i), \f_{i,j}(\ell_i))$ is equivalent to $(\G_j, \ell_j)$ up to a permutation of the edge labels. In this case, we write $(\f_{i,j}(\G_i), \f_{i,j}(\ell_i)) \sim (\G_j, \ell_j)$.\\
$\bullet$ There exists a sequence of permissible folds $\mF_{j,i}$ such that $(\f_{j,i}(\G_i), \f_{j,i}(\ell_i)) \sim (\G_j, \ell_j)$.

That is, starting with all high-valence, 5-edged graphs $\G$ and comprehensive loops $\ell$ such that $(\G,\ell)$ satisfies the \LDP, the figures below give the maximal strongly connected components of $\cup \mA(\G,\ell)/\sim$, computed as in \S \ref{ss:LDAalgs} using the program \cite{g20a}.

\vskip10pt

The following 3 graphs give single-graph maximal strongly connected components:

\vskip10pt

\begin{figure}[ht!]\label{f:automaton}
\includegraphics[width=1.5in]{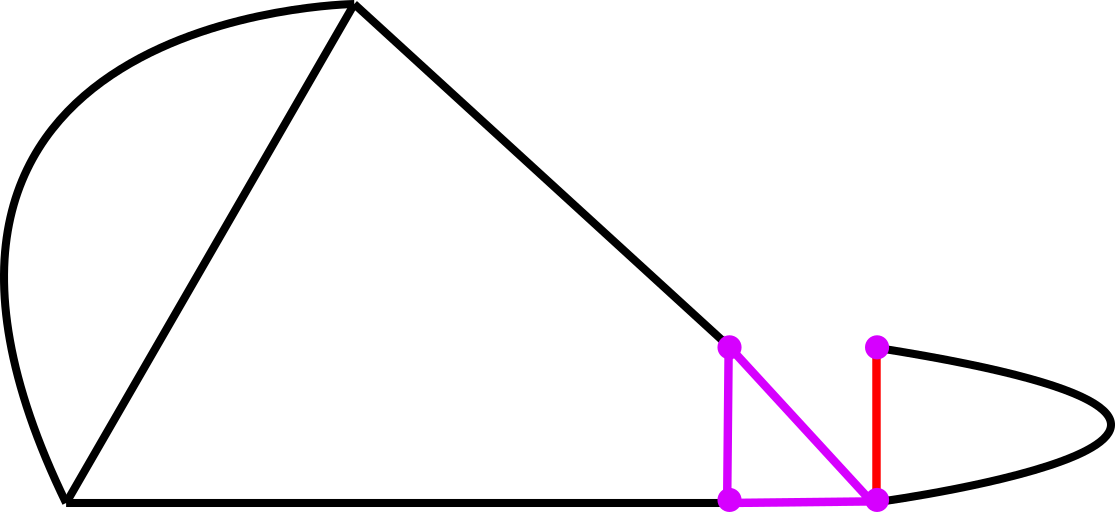}
\includegraphics[width=1.5in]{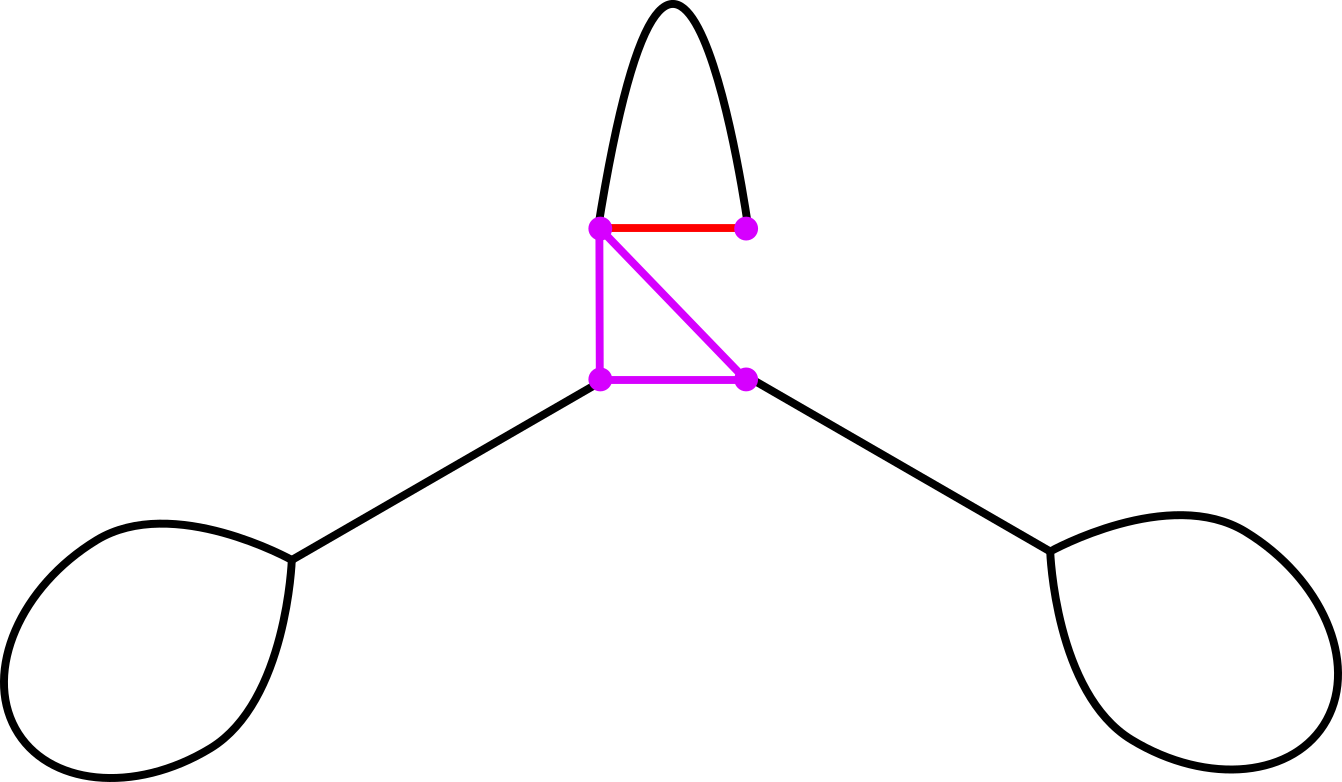}
\includegraphics[width=1.5in]{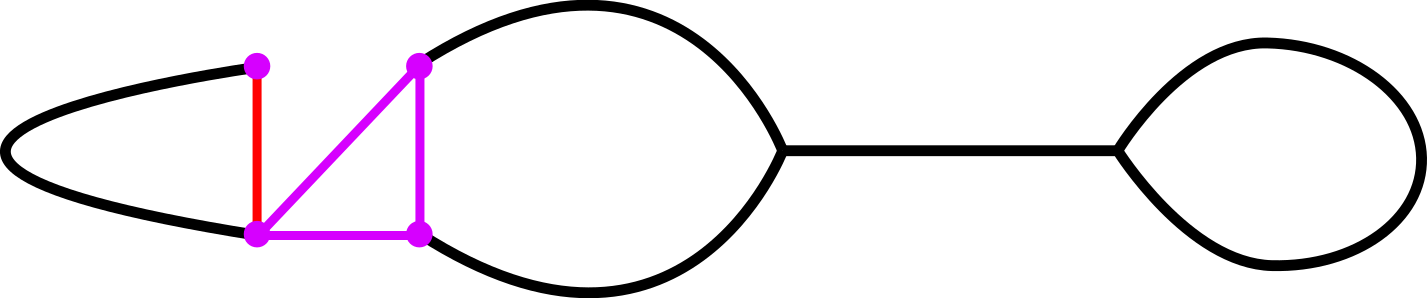}
\end{figure}

\vskip10pt

The primary maximal strongly connected component, which we will denote by $P\mA_3$, is (the interested reader can find the complete automaton written out at {\url{https://mast.queensu.ca/~cpfaff/Automaton}}.):

\vskip8pt

\begin{figure}[ht!]\label{f:automaton}
\includegraphics[width=5in]{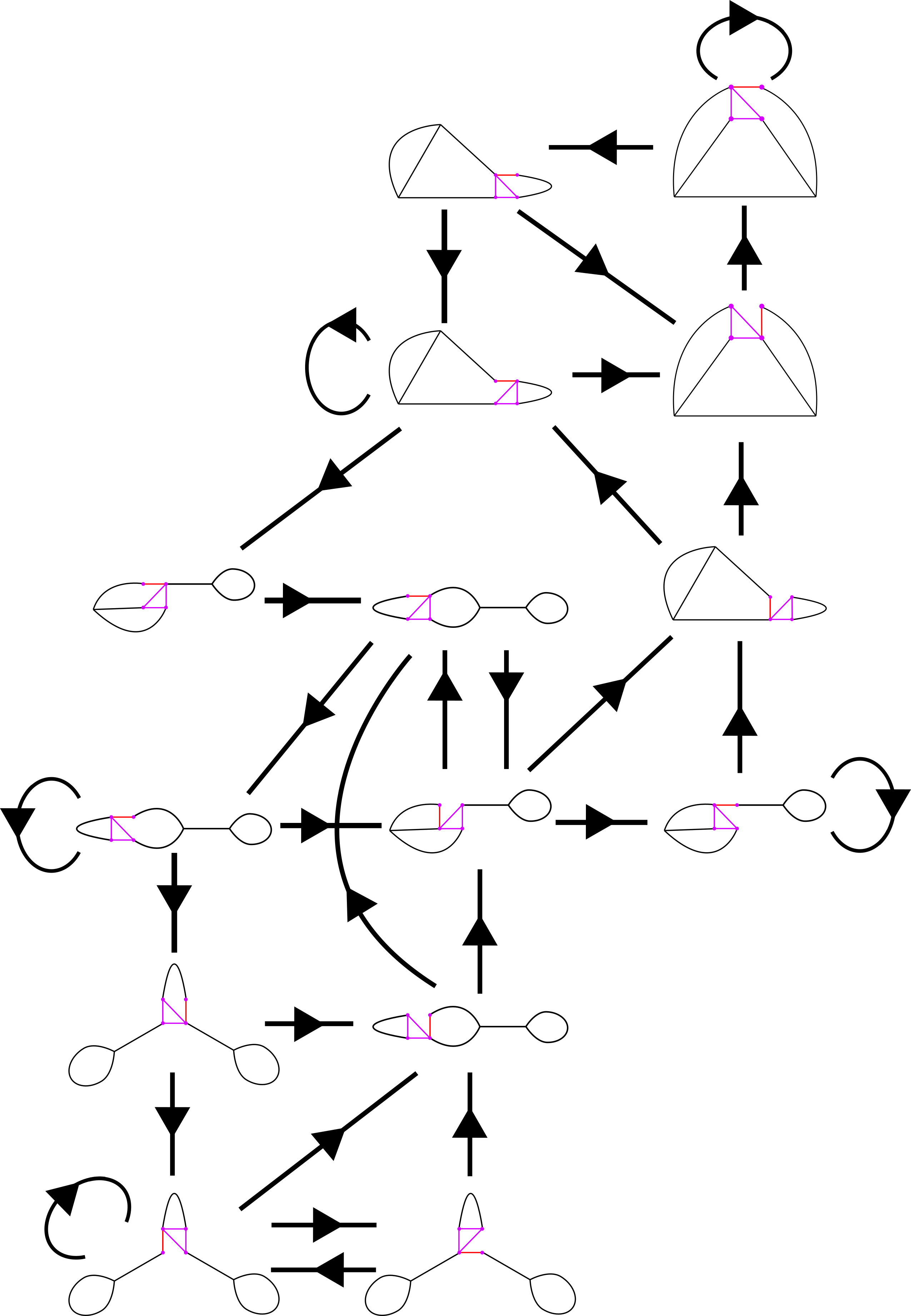}
\end{figure}

\newpage

\subsection{The map $F$}{\label{ss:MapF}}

We define as follows an edge map $F$ that is the composition of the folds determining the edges in a directed loop in $\mA_3$. Let $\Gamma = \Gamma_1, \Gamma_2, \Gamma_3, \Gamma_4, \Gamma_5$ be the graphs and $\f_1, \f_2, \f_3, \f_4$ the edge maps indicated in Figure~\ref{fig:loop}. Let $\ell_1$ be a loop in $\Gamma_1$ that takes all turns of $\Gamma_1$ except $\{a,\bar{c}\}$ and $\{a,\bar{d}\}$. We note that for each $i>1$ we have that $\f_i$ is an edge map corresponding to a permissible fold with respect to $\ell_i = \f_{i-1}(\ell_{i-1})$.

\vskip10pt

	\begin{figure}[h!]
	\includegraphics[width=6in]{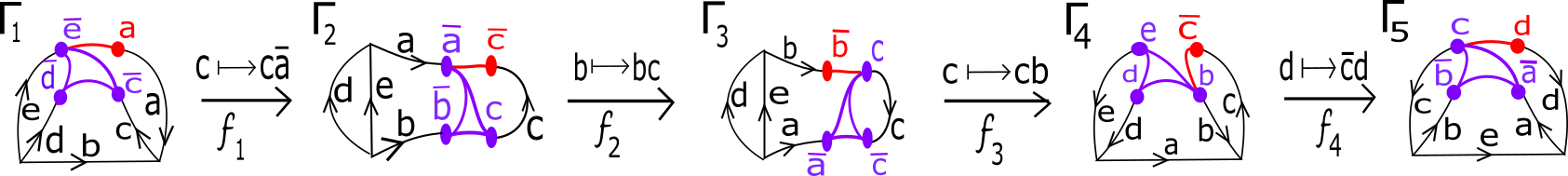}
	\captionsetup{singlelinecheck=off}
	\caption[foobar]{$\G = \G_1, \Gamma_2, \Gamma_3, \Gamma_4, \Gamma_5$ are the graphs with oriented edges $E^+(\G_i) = \{a,b,c,d,e\}$ as indicated, but with the colored inner graphs replaced by vertices. Each $\f_i \colon\G_i\to\G_{i+1}$ is an edge map sending each $\alpha\in E^+(\G_i)$ to itself except for the indicated map. Let $\ell_i$ be a comprehensive loop in $\G_i$ such that $\ell_i$ takes each turn at each valence-$3$ vertex of $\G_i$ and takes a turn $\{d_1, d_2\}$ at the valence-$4$ vertex of $\G_i$ if and only if $\{d_1, d_2\}$ is an edge in the graph depicted in the valence-$4$ vertex of $\G_i$ in the figure. The graph replacing the valence-$4$ vertex of $\G_i$ in the figure is defined by $\ell_i$ as follows. The embedded graph has a vertex for each direction at the valence-$4$ vertex of $\G_i$, represented at the beginning of the corresponding oriented edge, and there is an edge $\{d_1, d_2\}$ in the embedded graph precisely when $\{d_1, d_2\}$ is a turn taken by $\ell_i$.
	}
	\label{fig:loop}
	\end{figure}

	Let $\sigma$ be the cyclic permutation of edge labels $(a,d,b,e,\bar{c},\bar{a},\bar{d},\bar{b},\bar{e},c)$. For $4 < i \leq 40$, let $\f_i:\Gamma_i\to\Gamma_{i+1}$ be the edge map such that if $\f_{i-4}$ corresponds to the fold $\{d_1, d_2\}$, then $\f_i$ corresponds to the fold $\{\sigma(d_1), \sigma(d_2)\}$. By construction, $\f_i$ corresponds to a permissible fold with respect to $\ell_i = \f_{i-1}(\ell_{i-1})$.
	Note that $\Gamma_5$ is equal to $\Gamma$, but for a permutation on edge labels by $\sigma$. Moreover, note that the order of $\sigma$ is 10.
	
	Finally, let $F' \coloneqq \f_{40} \circ \f_{39} \circ \dots \circ \f_1:\Gamma \to \Gamma$. $F$ will denote the lowest transparent power of $F'$. The edge maps of $F'$ are:
	
	$$
a\mapsto\bar{d}e\bar{d}bc\bar{e}d\bar{c}\bar{a}\bar{e}bc\bar{d}e\bar{d}e\bar{d}ea\bar{b}eac\bar{d}e\bar{d}e\bar{d}ea\bar{b}eac\bar{d}e\bar{d}e\bar{d}ea\bar{b}ea\bar{b}eac\bar{d}e\bar{c}\bar{b}d\bar{e}d\bar{c}\bar{b}ea\bar{b}ea
\bar{b}eac\bar{d}e\bar{d}e\bar{d}bc\bar{e}d\bar{c}
\bar{a}\bar{e}bc
$$
$$
\bar{d}e\bar{d}e\bar{d}ea\bar{b}eac\bar{d}e\bar{d}e\bar{d}ea\bar{b}
eac\bar{d}e\bar{d}e\bar{d}bc\bar{d}e\bar{d}bc\bar{d}e\bar{d}bc\bar{e}d\bar{c}\bar{a}\bar{e}b
c\bar{d}e\bar{d}e\bar{d}ea\bar{b}eac\bar{d}e\bar{d}e\bar{d}bc\bar{d}e\bar{d}bc\bar{d}e\bar{d}bc\bar{e}d\bar{c}\bar{a}\bar{e}bc\bar{d}e\bar{d}
$$
$$
e\bar{d}eabar{b}eac\bar{d}e\bar{d}e\bar{d}bc\bar{d}e\bar{d}bc\bar{d}e\bar{d}bc\bar{e}d\bar{c}\bar{a}\bar{e}b\bar{a}\bar{e}b\bar{a}\bar{e}d\bar{e}d\bar{e}d\bar{c}\bar{b}eac\bar{d}e\bar{c}\bar{b}d\bar{e}d\bar{e}d\bar{c}\bar{a}\bar{e}b\bar{a}\bar{e}b\bar{a}\bar{e}bc\bar{d}e\bar{d}bc\bar{e}d\bar{c}\bar{a}\bar{e}b\bar{a}\bar{e}b\bar{a}\bar{e}
$$
$$
bc\bar{d}e\bar{d}e\bar{d}e\bar{d}bc\bar{e}d\bar{c}\bar{a}\bar{e}bc\bar{d}e\bar{d}e\bar{d}ea\bar{b}ea\bar{b}eac\bar{d}e\bar{c}\bar{b}d\bar{e}d\bar{c}\bar{b}d\bar{e}d\bar{c}\bar{b}d\bar{e}d\bar{e}d\bar{c}\bar{a}\bar{e}b\bar{a}\bar{e}d\bar{e}d\bar{e}d\bar{c}\bar{a}\bar{e}b\bar{a}\bar{e}d\bar{e}d\bar{e}d\bar{c}\bar{b}eac\bar{d}e\bar{c}\bar{b}d\bar{e}d
$$
$$
\bar{e}d\bar{c}\bar{a}\bar{e}b\bar{a}\bar{e}b\bar{a}\bar{e}bc\bar{d}e\bar{d}bc\bar{e}d\bar{c}\bar{a}\bar{e}b\bar{a}\bar{e}b\bar{a}\bar{e}bc\bar{d}e\bar{d}bc\bar{e}d\bar{c}\bar{a}\bar{e}b\bar{a}\bar{e}b\bar{a}\bar{e}bc\bar{d}e\bar{d}bc\bar{d}e\bar{d}bc\bar{e}d\bar{c}\bar{a}\bar{e}bc\bar{d}e\bar{d}e\bar{d}ea\bar{b}eac\bar{d}e\bar{d}e\bar{d}bc\bar{d}$$
$$
e\bar{d}bc\bar{d}e\bar{d}bc\bar{e}d\bar{c}\bar{a}\bar{e}b
$$

$$b\mapsto d\bar{e}d\bar{e}d\bar{c}\bar{a}\bar{e}b\bar{a}\bar{e}d\bar{e}d\bar{e}d\bar{c}\bar{a}\bar{e}b\bar{a}\bar{e}d\bar{e}d\bar{e}d\bar{c}\bar{b}eac\bar{d}e\bar{c}\bar{b}d\bar{e}d\bar{e}d\bar{c}\bar{a}\bar{e}b\bar{a}\bar{e}b\bar{a}\bar{e}bc\bar{d}e\bar{d}bc\bar{e}d\bar{c}\bar{a}\bar{e}b\bar{a}\bar{e}b\bar{a}\bar{e}d\bar{e}d\bar{e}d\bar{c}\bar{a}\bar{e}b\bar{a}\bar{e}d
$$
$$
\bar{e}d\bar{e}d\bar{c}\bar{a}\bar{e}b\bar{a}\bar{e}d\bar{e}d\bar{e}d\bar{c}\bar{b}eac\bar{d}e\bar{c}\bar{b}d\bar{e}d\bar{e}d\bar{c}\bar{a}\bar{e}b\bar{a}\bar{e}b\bar{a}\bar{e}bc\bar{d}e\bar{d}bc\bar{e}d\bar{c}\bar{a}\bar{e}\bar{e}b\bar{a}\bar{e}b\bar{a}\bar{e}d\bar{e}d\bar{e}d\bar{c}\bar{a}\bar{e}b\bar{a}\bar{e}d\bar{e}d\bar{e}d\bar{c}\bar{a}\bar{e}b\bar{a}\bar{e}d\bar{e}d\bar{e}d\bar{c}
$$
$$
\bar{b}eac\bar{d}e\bar{c}\bar{b}d\bar{e}d\bar{c}\bar{b}d\bar{e}d\bar{c}\bar{b}ea\bar{b}ea\bar{b}eac\bar{d}e\bar{d}e\bar{d}bc\bar{e}d\bar{c}\bar{a}\bar{e}bc\bar{d}e\bar{d}e\bar{d}ea\bar{b}ea\bar{b}eac\bar{d}e\bar{c}\bar{b}d\bar{e}d\bar{c}\bar{b}d\bar{e}d\bar{c}\bar{b}d\bar{e}d\bar{e}d\bar{c}\bar{a}\bar{e}b\bar{a}\bar{e}d\bar{e}d\bar{e}d\bar{c}
$$
$$
\bar{a}\bar{e}b\bar{a}\bar{e}d\bar{e}d\bar{e}
d\bar{c}\bar{b}eac\bar{d}e\bar{c}\bar{b}d\bar{e}d\bar{e}d\bar{c}\bar{a}\bar{e}b\bar{a}\bar{e}b\bar{a}\bar{e}bc\bar{d}e\bar{d}bc\bar{e}d\bar{c}\bar{a}\bar{e}b\bar{a}\bar{e}b\bar{a}\bar{e}bc\bar{d}e\bar{d}bc\bar{e}d\bar{c}\bar{a}\bar{e}b\bar{a}\bar{e}b\bar{a}\bar{e}bc\bar{d}e\bar{d}bc\bar{d}e\bar{d}bc\bar{e}d\bar{c}\bar{a}\bar{e}b
$$
$$
c\bar{d}e\bar{d}e\bar{d}ea\bar{b}eac\bar{d}e\bar{d}e\bar{d}bc\bar{d}e\bar{d}bc\bar{d}e\bar{d}bc\bar{e}d\bar{c}\bar{a}\bar{e}b\bar{a}\bar{e}b\bar{a}\bar{e}d\bar{e}d\bar{e}d\bar{c}\bar{b}eac\bar{d}e\bar{c}\bar{b}d\bar{e}d\bar{e}d\bar{c}\bar{a}\bar{e}b\bar{a}\bar{e}b\bar{a}\bar{e}bc\bar{d}e\bar{d}bc\bar{e}d\bar{c}\bar{a}\bar{e}b\bar{a}\bar{e}b\bar{a}\bar{e}
$$
$$
bc\bar{d}e\bar{d}bc\bar{e}d\bar{c}\bar{a}\bar{e}b\bar{a}\bar{e}b\bar{a}\bar{e}d\bar{e}d\bar{e}d\bar{c}\bar{a}\bar{e}b\bar{a}\bar{e}d\bar{e}d\bar{e}d\bar{c}\bar{a}\bar{e}b\bar{a}\bar{e}d\bar{e}d\bar{e}d\bar{c}\bar{b}eac\bar{d}e\bar{c}\bar{b}d\bar{e}d\bar{e}d\bar{c}\bar{a}\bar{e}b\bar{a}\bar{e}b\bar{a}\bar{e}bc\bar{d}e\bar{d}bc\bar{e}d\bar{c}\bar{a}\bar{e}b\bar{a}\bar{e}b\bar{a}\bar{e}
$$
$$
d\bar{e}d\bar{e}d\bar{c}\bar{a}\bar{e}b\bar{a}\bar{e}d\bar{e}d
\bar{e}d\bar{c}\bar{a}\bar{e}b\bar{a}\bar{e}d\bar{e}d\bar{e}d\bar{c}\bar{b}eac\bar{d}e\bar{c}\bar{b}d\bar{e}d\bar{e}d\bar{c}\bar{a}\bar{e}b\bar{a}\bar{e}b\bar{a}\bar{e}bc\bar{d}e\bar{d}bc\bar{e}d\bar{c}\bar{a}\bar{e}b\bar{a}\bar{e}b\bar{a}\bar{e}d\bar{e}d\bar{e}d\bar{c}\bar{a}\bar{e}b\bar{a}\bar{e}d\bar{e}d\bar{e}d\bar{c}
$$
$$
\bar{a}\bar{e}b\bar{a}\bar{e}d\bar{e}d\bar{e}d\bar{c}\bar{b}eac\bar{d}e\bar{c}\bar{b}d\bar{e}d\bar{c}\bar{b}d\bar{e}d\bar{c}\bar{b}ea\bar{b}ea\bar{b}eac\bar{d}e\bar{d}e\bar{d}bc\bar{e}d\bar{c}\bar{a}\bar{e}bc\bar{d}e\bar{d}e\bar{d}ea\bar{b}ea\bar{b}eac\bar{d}e\bar{c}\bar{b}d\bar{e}d\bar{c}\bar{b}d\bar{e}d\bar{c}\bar{b}d\bar{e}d\bar{e}d
$$
$$
\bar{c}\bar{a}\bar{e}b\bar{a}\bar{e}d\bar{e}d\bar{e}d\bar{c}$$

$$c\mapsto \bar{a}\bar{e}b\bar{a}\bar{e}d\bar{e}d\bar{e}d\bar{c}\bar{b}eac\bar{d}e\bar{c}\bar{b}d\bar{e}d\bar{e}d\bar{c}\bar{a}\bar{e}b\bar{a}\bar{e}b\bar{a}\bar{e}bc\bar{d}e\bar{d}bc\bar{e}d\bar{c}\bar{a}\bar{e}b\bar{a}\bar{e}b\bar{a}\bar{e}bc\bar{d}e\bar{d}bc\bar{e}d\bar{c}\bar{a}\bar{e}b\bar{a}\bar{e}b\bar{a}\bar{e}bc\bar{d}e\bar{d}bc\bar{d}e\bar{d}bc
$$
$$
\bar{e}d\bar{c}\bar{a}\bar{e}bc\bar{d}e\bar{d}e\bar{d}ea\bar{b}eac\bar{d}e\bar{d}e\bar{d}bc\bar{d}e\bar{d}bc\bar{d}e\bar{d}bc\bar{e}d\bar{c}\bar{a}\bar{e}b\bar{a}\bar{e}b\bar{a}\bar{e}d\bar{e}d\bar{e}d\bar{c}\bar{b}eac\bar{d}e\bar{c}\bar{b}d\bar{e}d\bar{e}d\bar{c}\bar{a}\bar{e}b\bar{a}\bar{e}b\bar{a}\bar{e}bc\bar{d}e\bar{d}bc\bar{e}d\bar{c}\bar{a}
$$
$$
\bar{e}b\bar{a}\bar{e}b
\bar{a}\bar{e}bc\bar{d}e\bar{d}bc\bar{e}d\bar{c}\bar{a}\bar{e}b\bar{a}\bar{e}b\bar{a}\bar{e}d\bar{e}d\bar{e}d\bar{c}\bar{a}
\bar{e}b\bar{a}\bar{e}d\bar{e}d\bar{e}d\bar{c}\bar{a}\bar{e}b\bar{a}\bar{e}d\bar{e}d\bar{e}d\bar{c}\bar{b}eac\bar{d}e\bar{c}\bar{b}d\bar{e}d\bar{e}d\bar{c}\bar{a}\bar{e}b\bar{a}\bar{e}b\bar{a}\bar{e}bc\bar{d}e\bar{d}bc\bar{e}d\bar{c}
$$
$$
\bar{a}\bar{e}b\bar{a}\bar{e}b\bar{a}\bar{e}d\bar{e}d\bar{e}d\bar{c}\bar{a}\bar{e}b\bar{a}\bar{e}d\bar{e}d\bar{e}d\bar{c}\bar{a}\bar{e}b\bar{a}\bar{e}d\bar{e}d\bar{e}d\bar{c}\bar{b}eac\bar{d}e\bar{c}\bar{b}d\bar{e}d\bar{e}d\bar{c}\bar{a}\bar{e}b\bar{a}\bar{e}b\bar{a}\bar{e}bc\bar{d}e\bar{d}bc\bar{e}d\bar{c}\bar{a}\bar{e}b\bar{a}\bar{e}b\bar{a}\bar{e}d\bar{e}d\bar{e}d\bar{c}\bar{a}\bar{e}
$$
$$
b\bar{a}\bar{e}d\bar{e}d\bar{e}d\bar{c}\bar{a}\bar{e}b\bar{a}\bar{e}d\bar{e}d\bar{e}d\bar{c}\bar{b}eac\bar{d}e\bar{c}\bar{b}d\bar{e}d\bar{c}\bar{b}d\bar{e}d\bar{c}\bar{b}ea\bar{b}ea\bar{b}eac\bar{d}e\bar{d}e\bar{d}bc\bar{e}d\bar{c}\bar{a}\bar{e}bc\bar{d}e\bar{d}e\bar{d}ea\bar{b}ea\bar{b}eac\bar{d}e\bar{c}\bar{b}d\bar{e}d\bar{c}\bar{b}
$$
$$
d\bar{e}d\bar{c}\bar{b}d\bar{e}d\bar{e}d\bar{c}\bar{a}\bar{e}b\bar{a}\bar{e}d\bar{e}d\bar{e}d\bar{c}\bar{b}eac\bar{d}e\bar{c}\bar{b}d\bar{e}d\bar{c}\bar{b}d\bar{e}d\bar{c}\bar{b}d\bar{e}d\bar{e}d\bar{c}\bar{a}\bar{e}b\bar{a}\bar{e}d\bar{e}d\bar{e}d\bar{c}\bar{b}eac\bar{d}e\bar{c}\bar{b}d\bar{e}d\bar{c}\bar{b}d\bar{e}d\bar{c}\bar{b}ea\bar{b}ea\bar{b}eac
$$	
$$
\bar{d}e\bar{c}\bar{b}d\bar{e}d\bar{c}\bar{b}ea\bar{b}ea\bar{b}eac\bar{d}e\bar{c}\bar{b}d\bar{e}d\bar{c}\bar{b}ea\bar{b}ea\bar{b}eac\bar{d}e\bar{d}e\bar{d}bc\bar{e}d\bar{c}\bar{a}\bar{e}bc\bar{d}e\bar{d}e\bar{d}ea\bar{b}eac\bar{d}e\bar{d}e\bar{d}ea\bar{b}eac\bar{d}e\bar{d}e\bar{d}bc\bar{d}e\bar{d}bc\bar{d}
$$
$$
e\bar{d}bbc\bar{e}d\bar{c}\bar{a}\bar{e}b\bar{a}\bar{e}b\bar{a}\bar{e}d\bar{e}d\bar{e}d\bar{c}\bar{b}eac\bar{d}e\bar{c}\bar{b}d\bar{e}d\bar{e}d\bar{c}\bar{a}\bar{e}b\bar{a}\bar{e}b\bar{a}\bar{e}bc\bar{d}e\bar{d}bc\bar{d}e\bar{d}bc\bar{e}d\bar{c}\bar{a}\bar{e}bc\bar{d}e\bar{d}e\bar{d}e	a\bar{b}eac\bar{d}e\bar{d}e\bar{d}ea\bar{b}eac
$$
$$
\bar{d}e\bar{d}e\bar{d}ea\bar{b}ea\bar{b}eac\bar{d}e\bar{c}\bar{b}d\bar{e}d\bar{c}\bar{b}ea\bar{b}ea\bar{b}eac\bar{d}e\bar{c}\bar{b}d\bar{e}d\bar{c}\bar{b}ea\bar{b}ea\bar{b}eac\bar{d}e\bar{d}e\bar{d}bc\bar{e}d\bar{c}\bar{a}\bar{e}bc\bar{d}e\bar{d}e\bar{d}ea\bar{b}ea\bar{b}eac\bar{d}e\bar{c}\bar{b}d\bar{e}d
$$
$$
\bar{c}\bar{b}d\bar{e}d\bar{c}\bar{b}d\bar{e}d\bar{e}d\bar{c}\bar{a}\bar{e}b\bar{a}\bar{e}d\bar{e}d\bar{e}d\bar{c}\bar{b}eac\bar{d}e\bar{c}\bar{b}d\bar{e}d\bar{c}\bar{b}d\bar{e}d\bar{c}\bar{b}d\bar{e}d\bar{e}d\bar{c}\bar{a}\bar{e}b\bar{a}\bar{e}d\bar{e}d\bar{e}d\bar{c}\bar{b}eac\bar{d}e\bar{c}\bar{b}d\bar{e}d\bar{c}\bar{b}d\bar{e}d\bar{c}\bar{b}d\bar{e}d\bar{e}d\bar{c}\bar{a}\bar{e}
$$
$$
b\bar{a}\bar{e}d\bar{e}d\bar{e}d\bar{c}\bar{a}\bar{e}b
\bar{a}\bar{e}d\bar{e}d\bar{e}d\bar{c}\bar{b}eac\bar{d}e\bar{c}\bar{b}d\bar{e}d\bar{e}d\bar{c}\bar{a}\bar{e}b\bar{a}\bar{e}b\bar{a}\bar{e}bc\bar{d}e\bar{d}bc\bar{e}d\bar{c}\bar{a}\bar{e}b\bar{a}\bar{e}b\bar{a}\bar{e}d\bar{e}d\bar{e}d\bar{c}\bar{a}\bar{e}b\bar{a}\bar{e}d\bar{e}d\bar{e}d\bar{c}\bar{a}\bar{e}b\bar{a}\bar{e}d\bar{e}
$$
$$
d\bar{e}d\bar{c}\bar{b}eac\bar{d}e\bar{c}\bar{b}d\bar{e}d$$

	$$d\mapsto ea\bar{b}ea\bar{b}eac\bar{d}e\bar{c}\bar{b}d\bar{e}d\bar{c}\bar{b}ea\bar{b}ea\bar{b}eac\bar{d}e\bar{c}\bar{b}d\bar{e}d\bar{c}\bar{b}ea\bar{b}ea\bar{b}eac\bar{d}e\bar{d}e\bar{d}bc\bar{e}d\bar{c}\bar{a}\bar{e}bc\bar{d}e\bar{d}e\bar{d}ea\bar{b}ea\bar{b}eac\bar{d}e\bar{c}\bar{b}d\bar{e}d\bar{c}
$$
$$
\bar{b}d\bar{e}d\bar{c}\bar{b}d\bar{e}d\bar{e}d\bar{c}\bar{a}\bar{e}b\bar{a}\bar{e}d\bar{e}d\bar{e}d\bar{c}\bar{b}eac\bar{d}e\bar{c}\bar{b}d\bar{e}d\bar{c}\bar{b}d\bar{e}d\bar{c}\bar{b}ea\bar{b}ea\bar{b}eac\bar{d}	e\bar{c}\bar{b}d\bar{e}d\bar{c}\bar{b}ea\bar{b}ea\bar{b}eac\bar{d}e\bar{c}\bar{b}d\bar{e}d\bar{c}\bar{b}ea\bar{b}ea\bar{b}ea
$$
$$
c\bar{d}e\bar{d}e\bar{d}bc\bar{e}d\bar{c}\bar{a}\bar{e}bc\bar{d}e\bar{d}e\bar{d}ea\bar{b}ea\bar{b}eac\bar{d}e\bar{c}\bar{b}d\bar{e}d\bar{c}\bar{b}d\bar{e}d\bar{c}\bar{b}d\bar{e}d\bar{e}d\bar{c}\bar{a}\bar{e}b\bar{a}\bar{e}d\bar{e}d\bar{e}d\bar{c}\bar{b}eac\bar{d}e\bar{c}\bar{b}d\bar{e}d\bar{c}\bar{b}d\bar{e}d\bar{c}\bar{b}ea\bar{b}ea\bar{b}e
$$
$$
ac\bar{d}e\bar{c}\bar{b}d\bar{e}d\bar{c}\bar{b}ea
\bar{b}ea\bar{b}eac\bar{d}e\bar{c}\bar{b}d\bar{e}d\bar{c}\bar{b}ea\bar{b}ea\bar{b}eac\bar{d}e\bar{d}e\bar{d}bc\bar{e}d\bar{c}\bar{a}\bar{e}bc\bar{d}e\bar{d}e\bar{d}ea\bar{b}eac\bar{d}e\bar{d}e\bar{d}ea\bar{b}eac\bar{d}e\bar{d}e\bar{d}bc\bar{d}e\bar{d}b
$$
$$
c\bar{d}e\bar{d}bc\bar{e}d\bar{c}\bar{a}\bar{e}b\bar{a}\bar{e}b\bar{a}\bar{e}d\bar{e}d\bar{e}d\bar{c}\bar{b}eac\bar{d}e\bar{c}\bar{b}d\bar{e}d\bar{e}d\bar{c}\bar{a}\bar{e}b\bar{a}\bar{e}b\bar{a}\bar{e}bc\bar{d}e\bar{d}bc\bar{d}e\bar{d}bc\bar{e}d\bar{c}\bar{a}\bar{e}bc\bar{d}e\bar{d}e\bar{d}ea\bar{b}eac\bar{d}e\bar{d}e\bar{d}ea\bar{b}ea
$$
$$
c\bar{d}e\bar{d}e\bar{d}ea\bar{b}ea\bar{b}eac\bar{d}e\bar{c}\bar{b}d\bar{e}d\bar{c}\bar{b}ea\bar{b}ea\bar{b}eac\bar{d}e
$$
	
$$e\mapsto bc\bar{d}e\bar{d}bc\bar{d}e\bar{d}bc\bar{e}d\bar{c}\bar{a}\bar{e}bc\bar{d}e\bar{d}e\bar{d}ea\bar{b}eac\bar{d}e\bar{d}e\bar{d}bc\bar{d}e\bar{d}bc\bar{d}e\bar{d}bc\bar{e}d\bar{c}\bar{a}\bar{e}bc\bar{d}e\bar{d}e\bar{d}ea\bar{b}eac\bar{d}e\bar{d}e\bar{d}bc\bar{d}e\bar{d}bc\bar{d}e\bar{d}b
$$
$$
c\bar{e}d\bar{c}\bar{a}\bar{e}b\bar{a}\bar{e}b\bar{a}\bar{e}d\bar{e}d\bar{e}d\bar{c}\bar{b}eac\bar{d}e\bar{c}\bar{b}d\bar{e}d\bar{e}d\bar{c}\bar{a}\bar{e}b\bar{a}\bar{e}b\bar{a}\bar{e}bc\bar{d}e\bar{d}bc\bar{d}e\bar{d}bc\bar{e}d\bar{c}\bar{a}\bar{e}bc\bar{d}e\bar{d}e\bar{d}ea\bar{b}eac\bar{d}e\bar{d}e\bar{d}ea\bar{b}eac\bar{d}e\bar{d}e
$$	
$$
\bar{d}ea\bar{b}ea\bar{b}eac\bar{d}e\bar{c}\bar{b}d\bar{e}d\bar{c}\bar{b}ea\bar{b}ea\bar{b}eac\bar{d}e\bar{d}e\bar{d}bc\bar{e}d\bar{c}\bar{a}\bar{e}bc\bar{d}e\bar{d}e\bar{d}ea\bar{b}eac\bar{d}e\bar{d}e\bar{d}ea\bar{b}eac\bar{d}e\bar{d}e\bar{d}bc\bar{d}e\bar{d}bc\bar{d}e\bar{d}bc\bar{e}d\bar{c}\bar{a}
$$	
$$
\bar{e}bc\bar{d}e\bar{d}e\bar{d}ea\bar{b}eac\bar{d}e\bar{d}e\bar{d}bc\bar{d}e\bar{d}bc\bar{d}e\bar{d}bc\bar{e}d\bar{c}\bar{a}\bar{e}bc\bar{d}e\bar{d}e\bar{d}ea\bar{b}eac\bar{d}e\bar{d}e\bar{d}bc\bar{d}e\bar{d}bc\bar{d}e\bar{d}bc\bar{e}d\bar{c}\bar{a}\bar{e}b\bar{a}\bar{e}b\bar{a}\bar{e}d\bar{e}d\bar{e}d\bar{c}\bar{b}e
$$
$$
ac\bar{d}e\bar{c}\bar{b}d\bar{e}d\bar{e}d\bar{c}\bar{a}\bar{e}b\bar{a}\bar{e}b\bar{a}\bar{e}bc\bar{d}e\bar{d}bc
$$		

\subsection{The turns taken by $F$}{\label{ss:Fturnstaken}}

\vskip 5pt

	\begin{df}[$\mathcal{T}_f$]
		Let $f:\G\to \G$ be an edge map.  Then define $\mathcal{T}_f$ to be the set of all turns taken by $f^k(E)$ before tightening, as $E$ ranges over $E(\G)$ and as $k$ ranges over $\ZZ_{>0}$. If $v$ is a vertex of $\G$, then $\mathcal{T}_f|_v$ is the restriction of $\mathcal{T}_f$ to turns at $v$.\\
	\end{df}

We now verify the validity for the algorithm used in \cite{g20b} to compute $\mathcal{T}_F$.

\begin{lem}{\label{l:lhwalg}}
	Let $\Gamma$ be a high-valence graph, $g:\Gamma \to \Gamma$ an edge map,  $v\in V(\Gamma)$, and $R_0$ the set of interior turns of $g$ in $\Gamma$.
	
	Given $R_0, R_1, \dots, R_i$, define $R_{i+1}$ as follows:
	
	\indent\indent Define $N_i \coloneqq \{ \{dg(t_1), dg(t_2)\} \mid \{t_1, t_2\} \in R_i\}$ and

\indent\indent\indent $R_{i+1} \coloneqq \{ \{t_1, t_2\} \mid \{t_1, t_2\} \in N_i, \{t_1, t_2\} \notin R_j \text{ for } j < i + 1\}$.
	
	Let $l$ be the first integer such that $R_l = \varnothing$ and set $R = \cup_{i=0}^{l-1}R_i$.
	Then $R = \mathcal{T}_g$.
\end{lem}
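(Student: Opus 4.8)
The plan is to establish the two inclusions $R \subseteq \mathcal{T}_g$ and $\mathcal{T}_g \subseteq R$ separately, each by a short induction, after first observing that the recursion terminates. Since a finite graph has only finitely many turns and the sets $R_i$ are pairwise disjoint by construction (each $R_{i+1}$ omits every turn lying in $R_0,\dots,R_i$), the sequence must reach $R_l = \varnothing$ for some $l$; moreover $R_l = \varnothing$ forces $N_l = \varnothing$ and hence $R_j = \varnothing$ for all $j \geq l$, so $R = \bigcup_{i=0}^{l-1} R_i = \bigcup_{i \geq 0} R_i$. I will also record two structural facts. First, $R_0$ is exactly the set of turns taken by the edge paths $g(E)$ as $E$ ranges over $E(\Gamma)$ (it is harmless that Definition~\ref{d:IntExtTurns} quantifies only over $E^+(\Gamma)$, since $g(\overline{E}) = \overline{g(E)}$ takes the same turns as $g(E)$). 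Second, $R$ is closed under the operation $\{t_1,t_2\} \mapsto \{Dg(t_1),Dg(t_2)\}$: if $\{t_1,t_2\} \in R_i$ then $\{Dg(t_1),Dg(t_2)\} \in N_i$, so by the definition of $R_{i+1}$ the turn $\{Dg(t_1),Dg(t_2)\}$ lies in $R_j$ for some $j \leq i+1$, hence in $R$.

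For $R \subseteq \mathcal{T}_g$, I will show $R_i \subseteq \mathcal{T}_g$ by induction on $i$. When $i = 0$, the turns taken by $g(E)$ are turns taken by $g^1(E)$, so $R_0 \subseteq \mathcal{T}_g$. For the inductive step, take $\tau = \{Dg(t_1),Dg(t_2)\} \in R_{i+1} \subseteq N_i$ with $\{t_1,t_2\} \in R_i$; by the inductive hypothesis there are $E \in E(\Gamma)$ and $k \geq 1$ with $g^k(E)$ taking $\{t_1,t_2\}$, and then Lemma~\ref{l:dirimg}(1) applied to the edge map $g$ shows that $g^{k+1}(E) = g\bigl(g^k(E)\bigr)$ takes $\tau$, so $\tau \in \mathcal{T}_g$.

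For $\mathcal{T}_g \subseteq R$, I will show by induction on $k \geq 1$ that, for every $E \in E(\Gamma)$, every turn taken by $g^k(E)$ before tightening lies in $R$. The case $k = 1$ is precisely $R_0 \subseteq R$. For the inductive step, fix $E$, set $\gamma \coloneqq g^k(E)$ and $\gamma' \coloneqq g(\gamma) = g^{k+1}(E)$, and let $\tau$ be a turn taken by $\gamma'$ before tightening. By Lemma~\ref{l:dirimg}(2), $\tau$ is either an interior or an exterior turn of $g$ in $\gamma'$. If it is interior, then $\tau$ is a turn taken by $g(E')$ for some edge $E'$ occurring in $\gamma$, hence $\tau \in R_0 \subseteq R$. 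If it is exterior, then $\tau = \{Dg(a),Dg(b)\}$ for some turn $\{a,b\}$ taken by $\gamma = g^k(E)$; by the inductive hypothesis $\{a,b\} \in R$, and the closure property recorded above then gives $\tau \in R$. Together the two inclusions give $R = \mathcal{T}_g$. (The vertex $v$ plays no role in the statement as written; intersecting every set above with the turns at $v$ yields the per-vertex identity $R|_v = \mathcal{T}_g|_v$ that is used in practice.)

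I expect the main obstacle to be nothing more than the careful bookkeeping needed to align the definitions: one must check that an interior turn of $g$ in $\gamma'$ in the sense of Definition~\ref{d:IntExtTurns} is genuinely a member of $R_0$ — i.e. that it suffices that the edge $E'$ merely occur in $\gamma$, with no further hypothesis on $\gamma$ — and that ``turn taken'' is interpreted consistently before tightening at every stage, so that Lemma~\ref{l:dirimg} applies verbatim to $\gamma = g^k(E)$ at each induction step. Once those points are pinned down, the termination argument and the two inductions are routine.
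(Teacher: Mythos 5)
Your proof is correct and follows essentially the same route as the paper's: both inclusions are handled by induction using Lemma~\ref{l:dirimg}, with interior turns landing in $R_0$ and exterior turns handled by the closure of $R$ under $\{t_1,t_2\}\mapsto\{Dg(t_1),Dg(t_2)\}$, which is exactly the paper's index-chasing argument ($\beta\le\alpha+1<l$) in a cleaner form. Your reorganization of the $R\subseteq\mathcal{T}_g$ direction as induction on $i$ rather than unwinding a chain back to $R_0$, together with the explicit termination remark and the $E^+(\Gamma)$ versus $E(\Gamma)$ bookkeeping, are only cosmetic refinements of the same argument.
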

	
\begin{proof}	
	We first show $\mathcal{T}_g \subseteq R$. Let $\{d_1, d_2\} \in \mathcal{T}_g$. Then, by definition, there exists an $E \in E(\Gamma)$ and $k\in \ZZ_{>0}$ such that $g^k(E)$ takes $\{d_1, d_2\}$ before tightening.
	Thus, it suffices to show for each $E \in E(\G)$ and $k \in \ZZ_{>0}$, $R$ contains all turns taken by $g^k(E)$ before tightening. To prove this, we fix an arbitrary $E\in E(\G)$ and proceed by induction on $k$.
	
	{\bf{Base Case:}} We want to show that all turns taken by $g(E)$ before tightening are contained in $R$. By definition, all turns taken by $g(E)$ before tightening are contained in $R_0 \subseteq R$.
	
	{\bf{Inductive Step:}} Fix $j \in \ZZ_{>0}$. Suppose all turns taken by $g^j(E)$ before tightening are contained in $R$.
	
	{\bf{Inductive Step:}} We show that all turns taken by $g^{j+1}(E)$ before tightening are contained in $R$. Let $\{t_{1, j+1}, t_{2, j+1}\}$ be a turn taken by $g^{j+1}(E)$ before tightening. By Lemma~\ref{l:dirimg}, $\{t_{1, j+1}, t_{2, j+1}\}$ is either an interior turn or an exterior turn of $g$ in $g^j(E)$.
	
	If $\{t_{1, j+1}, t_{2, j+1}\}$ is an interior turn, then $\{t_{1, j+1}, t_{2, j+1}\} \in R_0 \subseteq R$.
	So we assume $\{t_{1, j+1}, t_{2, j+1}\}$ is an exterior turn. Then, there exists a turn $\{t_{1,j}, t_{2,j}\}$ taken by $g^j(E)$ before tightening such that $\{t_{1, j+1}, t_{2, j+1}\} = \{Dg(t_{1,j}), Dg(t_{2,j})\}$.
	
	By the inductive hypothesis, $\{t_{1,j}, t_{2,j}\} \in R$. Thus, there exists some $\alpha < l$ such that $\{t_{1,j}, t_{2,j}\} \in R_\alpha$.
	Then, by the definition of $R_\alpha$, $\{t_{1, j+1}, t_{2, j+1}\} = \{Dg(t_{1,j}), Dg(t_{2,j})\} \in R_\beta$ for some $\beta \leq \alpha + 1$.
	Then, since $\alpha < l$, we have $\alpha + 1 \leq l$. This implies $\beta \leq l$. But since $R_l = \varnothing$ and $\{t_{1, j+1}, t_{2, j+1}\} \in R_\beta$, we have $\beta \neq l$. Thus, $\beta < l$, and $\{t_{1, j+1}, t_{2, j+1}\} \in R_\beta \subseteq R$.
	Therefore, $\{t_{1, j+1}, t_{2, j+1}\} \in R$.
	
	This proves the claim and thus that $\mathcal{T}_g \subseteq R$.
	
	To prove $R \subseteq \mathcal{T}_g$, we let $\{d_1, d_2\} \in R$ and show $\{d_1, d_2\} \in \mathcal{T}_g$. Either $\{d_1, d_2\} \in R_0$ or $\{d_1, d_2\}\in R_k$ for some $k > 0$. We prove these cases separately:
	
	{\bf{Case 1 ($\{d_1, d_2\} \in R_0$):}} If $\{d_1, d_2\} \in R_0$, then, by definition, there exists some $E\in E(\Gamma)$ such that $\{d_1, d_2\}$ is taken by $g(E)$ before tightening. Therefore, $\{d_1, d_2\} \in \mathcal{T}_g$. This proves the claim in Case 1.
	
	{\bf{Case 2 ($\{d_1, d_2\}\in R_k$ for some $k > 0$):}} By construction, for all $i > 0$ and turns $\{a,b\} \in R_i$, there exists a turn $\{c,d\} \in R_{i-1}$ such that $\{a,b\} = \{Dg(c), Dg(d)\}$. Then, $\{d_1, d_2\}$ can be obtained by applying $k$ times $Dg$ to a turn in $R_0$, i.e. there must exist some sequence of turns $\{d_{0,1}, d_{0,2}\},\{d_{1,1}, d_{1,2}\},\dots,\{d_{k,1}, d_{k,2}\}$ such that:
	\begin{itemize}
		\item $\{d_{k,1}, d_{k,2}\}=\{d_{1}, d_{2}\}$,
		\item $\{d_{i,1}, d_{i, 2}\} \in R_i$,
		\item $\{d_{i+1,1}, d_{i+1,2}\} = \{Dg(d_{i,1}), Dg(d_{i,2})\}$ for $i = 1, 2, \dots, k-1$, and
		\item $\{d_{0,1}, d_{0,2}\} \in R_0$.
	\end{itemize}
	Thus, $\{d_1, d_2\} = \{Dg^{k}(d_{0,1}), Dg^{k}(d_{0,2})\}$. Since $\{d_{0,1}, d_{0,2}\}\in R_0$, there exists some $E\in E(\Gamma)$ satisfying that $\{d_{0,1}, d_{0,2}\}$ is taken by $g(E)$ before tightening. Therefore, $\{d_1, d_2\}$ is a turn taken by $g^{k+1}(E)$ before tightening.
	
	Thus, $\{d_1, d_2\} \in \mathcal{T}_g$, which proves the claim in Case 2.
	
	This proves that $R\subseteq \mathcal{T}_g$.
	Since $\mathcal{T}_g\subseteq R$ and $\mathcal{T}_g \supseteq R$, we must have $\mathcal{T}_g = R$, which completes the proof.\\
\end{proof}

\begin{lem}
	Suppose $f = h^n$ for some $n\in\mathbb{N}$. Then $\mT_{f} = \mT_h$
\end{lem}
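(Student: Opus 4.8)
The plan is to prove the two inclusions separately, with essentially all the work in the second. The inclusion $\mT_f\subseteq\mT_h$ is immediate: since $f^k=h^{nk}$ for all $k\ge 1$, any turn taken before tightening by $f^k(E)$ is taken before tightening by $h^{nk}(E)$, hence lies in $\mT_h$. The reverse inclusion is \emph{not} the mirror image of this: a turn can be taken by $h^m(E)$ for some $m$ not divisible by $n$ yet be taken by no $h^{nk}(E')$ at all, so one cannot simply rewrite $h^m$ in terms of $f$. I would instead extract the equality from a monotonicity property of the turn sets.

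For $m\ge 1$ put $T_m\coloneqq\bigcup_{E\in E(\G)}\{\text{turns taken before tightening by }h^m(E)\}$, so that $\mT_h=\bigcup_{m\ge 1}T_m$ and, because $f^k=h^{nk}$, also $\mT_f=\bigcup_{k\ge 1}T_{nk}$. The crux is the claim that $T_m\subseteq T_{m+1}$ for every $m\ge 1$. To prove it, fix $E\in E(\G)$. Since $h$ is a homotopy equivalence of the connected high-valence graph $\G$ it is surjective, so $E$ occurs in the edge path $h(E'')$ for some $E''\in E(\G)$ (apply surjectivity to an interior point of $E$; the opposite orientation occurs in $h(\overline{E''})$). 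Writing $h(E'')=c_1\cdots c_s$ with $E=c_i$, we have $h^{m+1}(E'')=h^m(c_1)\cdots h^m(c_s)$, so $h^m(E)=h^m(c_i)$ is a subpath of $h^{m+1}(E'')$; a subpath takes only turns taken by the whole path (before tightening), so every turn of $h^m(E)$ lies in $T_{m+1}$. With the claim in hand, $T_1\subseteq T_2\subseteq\cdots$ is increasing, and since $\{nk:k\ge 1\}$ is cofinal in $\ZZ_{>0}$ this gives $\mT_h=\bigcup_{m\ge 1}T_m=\bigcup_{k\ge 1}T_{nk}=\mT_f$.

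The step I expect to need the most care is the monotonicity claim $T_m\subseteq T_{m+1}$, and in particular its use of the surjectivity (equivalently, edge-surjectivity) of $h$: this hypothesis is genuinely necessary, since for the non-surjective edge map of the rank-$3$ rose given on loop edges by $a\mapsto b\overline c,\ b\mapsto bc,\ c\mapsto c$ one checks directly that $\{\overline b,\overline c\}\in\mT_h\setminus\mT_{h^2}$. In the setting at hand this causes no trouble because $h$ is a composition of (different-length) folds, hence a quotient map onto $\G$, so surjectivity is automatic; more generally, a homotopy equivalence of a connected graph all of whose vertices have valence $\ge 3$ is always edge-surjective, as otherwise $h_\ast$ would factor through the fundamental group of a proper subgraph of strictly smaller rank, contradicting that $h_\ast$ is an isomorphism. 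The remaining bookkeeping is purely formal.
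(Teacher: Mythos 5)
Your proof is correct, and its skeleton is the same as the paper's: one inclusion is immediate from $f^k=h^{nk}$, and the other is obtained by exhibiting a lower-power edge image as a subpath of a higher-power edge image, so that its (untightened) turns survive. The differences are worth noting. First, you prove the nontrivial inclusion in the orientation actually dictated by the statement $f=h^n$, namely $\mT_h\subseteq\mT_f$; the paper's written proof instead manipulates identities of the form $h^j(e)=f^{nj}(e)$ and $h^k=f^{nk}$, i.e.\ it is written as though $h=f^n$, so the roles of $f$ and $h$ are swapped relative to its own statement (a relabeling, not a mathematical difference). Second, the paper's key step invokes the positivity property ``$f$ maps each edge over each edge at least once'' (established by inspection for the specific map $F$, but not stated as a hypothesis of the lemma) to conclude that $f^{nj-k}(e)$ contains the \emph{same} edge $e$; your monotonicity claim $T_m\subseteq T_{m+1}$ only needs edge-surjectivity of $h$ and allows the witnessing edge to change, which is a strictly weaker hypothesis, and you justify it intrinsically (compositions of folds are surjective; a homotopy equivalence of a connected high-valence graph must hit every edge, since otherwise it would factor through a proper subgraph whose fundamental group is a proper free factor). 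Third, your counterexample $a\mapsto b\bar c$, $b\mapsto bc$, $c\mapsto c$ showing $\{\bar b,\bar c\}\in\mT_h\setminus\mT_{h^2}$ is a genuine addition: it demonstrates that the lemma as literally stated, with no hypothesis on $h$, is false, so some such assumption (positivity in the paper, edge-surjectivity in your version) is not optional. In the setting where the lemma is applied ($h=F'$ a composition of permissible folds with Perron--Frobenius behavior) both hypotheses hold, so both arguments go through; yours is the more economical and more carefully quantified of the two.
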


\begin{proof}
	First we show $\mT_{f} \subseteq \mT_h$. Let $\{d_1, d_2\} \in \mT_{f}$. We aim to show $\{d_1, d_2\} \in \mT_{h}$. Since $\{d_1, d_2\} \in \mT_{f}$, there exists some $k\geq 1$ and $e\in E^+(\G)$ such that $f^k(e)$ takes $\{d_1, d_2\}$.
	Let $nj > k$. Since $f$ maps each $\alpha\in E^+(\G)$ over each $\beta\in E^+(\G)$ at least once, $f^i$ maps each $\alpha\in E^+(\G)$ over each $\beta\in E^+(\G)$ at least once for every $i \geq 1$.
	In particular, $f^{nj-k}(e)$ contains $e$. Thus, $h^j(e) = f^{nj}(e) = (f^k\circ f^{nj-k})(e)$ contains $(f)^k(e)$. Thus $h^j(e)$ takes $\{d_1, d_2\}$. Thus,  $\{d_1, d_2\}\in \mT_{h}$, and so $\mT_{f} \subseteq \mT_h$.
	
	Now we show $\mT_{h} \subseteq \mT_{f}$. Let $\{d_1, d_2\} \in \mT_{h}$. We aim to show $\{d_1, d_2\} \in \mT_{f}$. Since $\{d_1, d_2\} \in \mT_{h}$, there exists $k\geq 1$ and $e\in E^+(\G)$ such that $h^k(e)$ takes $\{d_1, d_2\}$. Since $h^k = f^{nk}$, then $f^{nk}(e)$ also takes $\{d_1, d_2\}$. Thus, $\{d_1, d_2\}\in\mT_{f}$, and so $\mT_{h} \subseteq \mT_{f}$.
	
Since we have shown containment in both directions, $\mT_{f} = \mT_h$.\\
\end{proof}

Using the program \cite{g20b}, one obtains:
	$$\mathcal{T}_F = \{\{d,e\},\{\bar{d},\bar{e}\},\{b,d\},\{\bar{b},c\},\{\bar{c},\bar{e}\},\{\bar{c},\bar{d}\},\{\bar{a},c\},
\{\bar{e},a\},\{b,e\},\{\bar{a},\bar{b}\}\}$$.
	
\vskip 5pt

\section{Train track maps \& (principal) fully irreducible outer automorphisms}{\label{s:tt}}

\subsection{Train track maps \& fully irreducible outer automorphisms}

	\begin{df}[Train track maps, transition matrix, Perron-Frobenius matrix]
	Let $g:\G\to\G$ an edge map. We call $g$ a \textit{train track map} if for each $k\in\ZZ_{>0}$ and $e\in E(\G)$, we have that $g^k(e)$ does not contain $e'\overline{e'}$ for any $e'\in E(\G)$. We call the train track map $g$ \textit{expanding} if for each edge $e \in E(\Gamma)$ we have that $|g^n(e)|\to\infty$ as $n\to\infty$, where for a path $\gamma$ we use $|\gamma|$ to denote the number of edges $\gamma$ traverses (with multiplicity). Note that, apart from our not requiring a ``marking,'' these definitions coincide with those in  \cite{bh92} when $g$ is instead viewed as a homotopy equivalence of graphs. We remind the reader that we often drop discussion of the marking when it does not affect our arguments.
	
	The \textit{transition matrix} of a train track map $g:\Gamma \to \Gamma$ is the square $|E^+(\Gamma)| \times |E^+(\Gamma)|$ matrix $[a_{ij}]$ such that $a_{ij}$, for each $i$ and $j$, is the number of times $g(e_i)$ contains either $e_j$ or $\overline{e_j}$.
	A transition matrix $A=[a_{ij}]$ is \textit{Perron-Frobenius (PF)} if there exists an $N$ such that, for each $k \geq N$, we have that $A^k$ is strictly positive.
	
	Since we do not explicitly use the definition of a Nielsen path, periodic Nielsen path (PNP), or indivisible Nielsen path (iNP), we refer the reader to \cite{bh92} for the definitions.	
	\end{df}

\begin{df}[Irreducible, fully irreducible] We call a train track map \emph{irreducible} if it has no proper invariant subgraph with a noncontractible component. It is useful to note that an edge map whose transition matrix is Perron-Frobenius must be irreducible. An outer automorphism $\vphi\in\out$ is \emph{fully irreducible} if no positive power preserves the conjugacy class of a proper free factor of $F_r$. Bestvina and Handel \cite{bh92} proved that every fully irreducible outer automorphism admits expanding irreducible train track representatives.\\
\end{df}

\subsection{Whitehead graphs}{\label{ss:WGs}}
Local Whitehead graphs, local stable Whitehead graphs, and ideal Whitehead graphs were introduced in \cite{hm11} and give information about how images of edges pass through vertices.
We give definitions here only in the circumstance of no PNPs, as this will always be the case for us. We further assume that we have replaced any edge map $g$ by a transparent power (in the sense of Definition~\ref{d:Transparent}).

	Let $g:\G \to \G$ be a train track map. The \textit{local Whitehead graph} $LW(g; v)$ at a $v \in V(\Gamma)$ has a vertex for each direction at $v$ and an edge connecting the vertices corresponding to a pair of directions $\{d_1,d_2\}$ at $v$ precisely when the turn $\{d_1,d_2\}$ is $g$-taken. Given a fixed vertex $v$, we obtain the \emph{local stable Whitehead graph} $SW(g;v)$ from $LW(g;v)$ by restricting to the fixed direction vertices and the edges between them.
In this situation of no PNPs, if $g$ represents a fully irreducible outer automorphism $\vphi$, then the \emph{ideal Whitehead graph} $IW(\vphi)$ for $\vphi$ is isomorphic to the disjoint union $\bigsqcup SW(g;v)$ taken over all fixed vertices of $g$. Justification of the ideal Whitehead graph being an outer automorphism and conjugacy class invariant can be found in \cite{hm11, Thesis}.

\begin{rmk}
	For a transparent train track map  $g:\G \to \G$, the set of edges in $LW(g;v)$ is equal to $\mathcal{T}_g|_v$.\\
\end{rmk}

\subsection{Full irreducibility criterion}{\label{ss:fic}}

We will use the following criterion for proving that a train track map represents a fully irreducible outer automorphism.

\begin{prop}\cite[Proposition 4.1]{IWGII}(\emph{Full Irreducibility Criterion (FIC)})\label{prop:FIC}
	Let $g\colon \Gamma \to \Gamma$ be a PNP-free, irreducible train track representative of $\vphi \in Out(F_r)$. Suppose that the transition matrix for $g$ is Perron-Frobenius and that all the local Whitehead graphs are connected. Then $\vphi$ is a fully irreducible outer automorphism.\\
\end{prop}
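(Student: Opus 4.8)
The plan is to argue by contradiction, exploiting that all three hypotheses are inherited by powers. Suppose $\vphi$ is not fully irreducible; then some power $\vphi^k$ fixes the conjugacy class of a proper free factor $A\leq F_r$. I would replace $g$ by a transparent power of $g^k$ and $\vphi$ by $\vphi^k$, first checking that the hypotheses persist: the transition matrix of $g^k$ is the $k$-th power of a Perron--Frobenius matrix, hence Perron--Frobenius; a periodic Nielsen path for $g^k$ is one for $g$, so PNP-freeness is inherited; and by the Lemma giving $\mT_{f}=\mT_h$ when $f=h^n$, the set of $g^k$-taken turns equals $\mT_g$, so (by the Remark identifying the edges of a local Whitehead graph with $\mT_g|_v$) each $LW(g^k;v)$ equals $LW(g;v)$ and is connected. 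Thus I may assume $\vphi$ itself fixes $[A]$ with $A$ a proper free factor.

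Next I would bring in the attracting lamination. Since $g$ is an expanding Perron--Frobenius train track map, $\vphi$ has a \emph{unique} attracting lamination $\Lambda$, whose leaves are the bi-infinite lines realized as limits of the paths $g^n(e)$, $e\in E(\G)$, and whose turns at a vertex $v$ are exactly the edges of $LW(g;v)$. The heart of the argument is the claim that \emph{if every $LW(g;v)$ is connected, then $\Lambda$ is filling}, i.e.\ not carried by any proper free factor. I would prove this by collapsing a spanning tree of $\G$ to a marked rose $R$ and noting that the Whitehead graph of $\Lambda$ with respect to $R$ is assembled from the connected graphs $LW(g;v)$ by gluing along the directions of the collapsed edges, each of which is crossed by leaves of $\Lambda$ since $\Lambda$ is comprehensive (every leaf crosses every edge, by the Perron--Frobenius property); hence this rose Whitehead graph is connected. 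Because $\Lambda$ is $\vphi$-invariant, its Whitehead graph relative to $\vphi^n R$ is isomorphic to the one relative to $R$ for every $n$, and using that $\G$ (with $g$) is the $g$-stable model toward which the graphs $\vphi^n R$ fold, connectedness along the whole $\vphi$-orbit forces connectedness of the Whitehead graph of $\Lambda$ relative to \emph{every} marked graph, which is the standard Whitehead-graph characterization of being filling. (Alternatively, one can simply quote this claim from the lamination literature.)

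Finally I would derive the contradiction from ``$\vphi$ fixes $[A]$, $A$ proper'' together with ``$\Lambda$ filling.'' If $\vphi|_A$ is exponentially growing, it has its own attracting lamination, carried by $[A]$ and therefore distinct from the filling lamination $\Lambda$; then $\vphi$ has at least two attracting laminations, contradicting the uniqueness forced by the Perron--Frobenius transition matrix of $g$. If instead $\vphi|_A$ is polynomially growing, then after a further power it fixes the conjugacy class of some nontrivial $w\in A$; realizing $[w]$ as a loop in $\G$, the train track map $g$ fixes this free homotopy class, which produces a periodic Nielsen path for $g$, contradicting PNP-freeness. Either way we reach a contradiction, so $\vphi$ is fully irreducible.

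I expect the main obstacle to be the claimed implication ``all $LW(g;v)$ connected $\Rightarrow$ $\Lambda$ filling'': converting single-graph, vertex-local connectivity into the basis-independent statement that $\Lambda$ is not carried by any proper free factor requires either a careful use of the $\vphi$-invariance of $\Lambda$ together with the fact that $\G$ is the stable train track model (so the Whitehead graphs on the $\vphi^n R$ genuinely approximate the data on $\G$), or an appeal to the established Whitehead-graph criterion for filling laminations. The PNP-free hypothesis is also essential and is exactly what is used in the last paragraph to rule out the geometric/boundary phenomena and the polynomially-growing invariant free factor.
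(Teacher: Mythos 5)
First, a point of comparison: the paper does not prove Proposition \ref{prop:FIC} at all; it is imported verbatim from \cite[Proposition 4.1]{IWGII} and used as a black box, so your proposal is a from-scratch reproof rather than a variant of an in-paper argument. Your overall skeleton is a reasonable shape and close in spirit to how such criteria are established: pass to a power; observe that an expanding irreducible train track map with Perron--Frobenius matrix is a one-stratum (EG) relative train track representative, so $\vphi$ has a unique attracting lamination $\Lambda$; then, for a putative $\vphi$-invariant proper free factor $A$, split into the exponentially growing case (a second attracting lamination carried by $A$, contradicting uniqueness once $\Lambda$ fills) and the polynomially growing case (a periodic nontrivial conjugacy class, hence a PNP).

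The genuine gap is the crux claim that connectivity of all local Whitehead graphs forces $\Lambda$ to be filling. What your spanning-tree argument actually yields is connectivity of the Whitehead graph of $\Lambda$ with respect to the single rose obtained by collapsing a maximal tree of $\G$ (and its $\vphi$-orbit of markings). The certificate you then invoke requires ruling out a disconnection in \emph{every} basis/marked graph, and connectivity of a Whitehead graph is not invariant under change of basis: a Whitehead move disconnects a connected Whitehead graph exactly when it has a cut vertex, and nothing in your construction excludes cut vertices (already in $F_2$ the primitive word $ab^{2}$ has a connected Whitehead graph in the basis $\{a,b\}$, so one-basis connectivity certifies nothing about being filling). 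The sentence ``using that $\G$ is the $g$-stable model toward which the graphs $\vphi^{n}R$ fold, connectedness along the whole $\vphi$-orbit forces connectedness \ldots relative to every marked graph'' is an assertion, not an argument; making it precise --- via peak reduction/Whitehead moves, or the Bestvina--Feighn--Handel free-factor-support machinery, where the PNP hypothesis and the $\vphi$-invariance of $\Lambda$ do real work --- is essentially the content of the cited proposition, and falling back on ``quote the claim from the lamination literature'' amounts to citing the statement being proved. Two smaller debts: that a polynomially growing restriction has a power fixing a nontrivial conjugacy class, and that a nontrivial periodic conjugacy class forces a PNP for an expanding irreducible train track map, are both true but nontrivial facts used without proof or reference.
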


\subsection{Principal fully irreducible outer automorphisms}

Principal fully irreducible outer automorphisms (sometimes just called principal outer automorphisms) were introduced in \cite{stablestrata} as analogues of pseudo-Anosov surface homeomorphisms with only $3$-pronged singularities.

\begin{df}[principal fully irreducible outer automorphism]
As in \cite{stablestrata}, we call a fully irreducible outer automorphism $\vphi \in Out(F_r)$ \emph{principal} if $IW(\vphi)$ is the disjoint union of $2r-3$ triangles.
\end{df} 
By the main theorem of \cite{loneaxes}, principal outer automorphisms are lone axis outer automorphisms. Relevant for us will be that each of their train track representatives has a unique Stallings fold decomposition, the Stallings fold decompositions of any two of these train track representatives are cyclic permutations of each other, and none of these train track representatives has a PNP.

\section{Main Proof}{\label{s:MainProof}}

The following lemma will tell us that loops in $\mA_3$ yield train track maps.

\vskip1pt

	\begin{lem}
	Let $\G$ be a high-valence graph, $\ell$ a tight comprehensive loop on $\G$, and $\mF = (\f_1, \f_2, \dots, \f_n)$ a sequence of permissible folds in $\G$ such that the following are satisfied by $f=:\f_n\circ \cdots\circ \f_1$:

\noindent 1. There exists an ornamentation-preserving (but for the addition of primes) isomorphism between $f(\G)$ and $\G$ and
	
\noindent 2. the isomorphism induces an identification of the set of turns taken by $\ell$ with the set of turns taken by $f(\ell)$.
	
Then, $f$ is a train track map.
\end{lem}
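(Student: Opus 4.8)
The plan is to use the isomorphism $\psi\colon f(\G)\to\G$ supplied by hypothesis~1 to regard $f$ as a self-map $g\coloneqq\psi\circ f\colon\G\to\G$; since each $F_i$ is a fold quotient (hence an edge map) and $\psi$ is an isomorphism, $g$ is an edge map, and ``$f$ is a train track map'' means precisely that $g^k(e)$ contains no subpath $e'\overline{e'}$ for every $e\in E(\G)$ and every $k\geq 1$, i.e. that $g^k(e)$ is tight. The mechanism I would exploit is that whether a fold sequence is permissible with respect to a loop, and which turns the resulting loop takes, depend on that loop only through the \emph{set of turns} it takes (by Lemmas~\ref{l:dirimg} and~\ref{l:foldturns}); so this set can be carried along the iteration.

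The key claim is: for every $k\geq 1$, $g^k(\ell)$ is a tight, comprehensive loop in $\G$ that takes exactly the same set of turns as $\ell$. I would prove this by induction on $k$. For $k=1$, $g(\ell)=\psi(f(\ell))$ is comprehensive since folds are surjective (the remark after Definition~\ref{d:folds}) and $\psi$ is an isomorphism, is tight by Lemma~\ref{F(ell) tight} applied to $F_1,\dots,F_n$ in turn (together with tightness-preservation under $\psi$), and takes the same turns as $\ell$ by hypothesis~2. For the inductive step, assume $g^k(\ell)$ is tight, comprehensive, and takes the same turns as $\ell$. Because $g^k(\ell)$ and $\ell$ take the same turns, the fold $F_1$ (permissible with respect to $\ell$) does not fold a turn of $g^k(\ell)$, so $F_1$ is permissible with respect to $g^k(\ell)$; moreover $F_1(g^k(\ell))$ and $F_1(\ell)$ are both comprehensive in $F_1(\G)$ and, by Lemmas~\ref{l:dirimg} and~\ref{l:foldturns}, take the same set of turns (the interior turns of $F_1$ that occur are all of them, by comprehensiveness, and the exterior turns depend only on the turn set of the loop). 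Repeating this through $F_2,\dots,F_n$ shows $f$, hence $g$, is a sequence of permissible folds with respect to $g^k(\ell)$, and that $g^{k+1}(\ell)=g\bigl(g^k(\ell)\bigr)$ takes the same turns as $g(\ell)$, hence as $\ell$. Tightness of $g^{k+1}(\ell)$ follows from Lemma~\ref{F(ell) tight} applied fold-by-fold to the tight loop $g^k(\ell)$ (and tightness-preservation under $\psi$), and comprehensiveness follows from surjectivity; this closes the induction.

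To finish, fix $k\geq 1$ and $e\in E(\G)$. Since $\ell$ is comprehensive it traverses $e$ in one of its two orientations, so $g^k(e)$ (or its reverse) appears as a subpath of $g^k(\ell)$. By the key claim $g^k(\ell)$ is tight, hence contains no $e'\overline{e'}$, and therefore neither does its subpath $g^k(e)$. As $k$ and $e$ were arbitrary, $g$ is a train track map.

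I expect the main obstacle to be the inductive step of the key claim: verifying that ``takes the same set of turns'' is propagated by a permissible fold and that permissibility of the whole sequence $f$ transfers from $\ell$ to $g^k(\ell)$. This rests on Lemmas~\ref{l:dirimg} and~\ref{l:foldturns} but requires care that no tightening is ever forced during the iteration --- which is exactly what Lemma~\ref{F(ell) tight} guarantees --- so that ``the turns taken by'' a loop is unambiguous at every stage.
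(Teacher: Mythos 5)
Your proof is correct and follows essentially the same route as the paper: the paper argues (tersely) that hypotheses (1) and (2) make $f$ a sequence of permissible folds on each iterate $f^j(\G)$, so that $f^k$ is a permissible fold sequence on $\G$, and then invokes Lemma~\ref{l:F local inj}; your induction on the turn set, tightness, and comprehensiveness of $g^k(\ell)$ just makes that implicit step explicit, and your closing subpath argument is exactly the content of Lemma~\ref{l:F local inj} inlined rather than cited. No gaps.
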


\begin{proof}
	Let $k\in\ZZ_{>0}$ and $e\in E(\G)$ be fixed and arbitrary. It suffices to show that $f^k(e)$ is tight.
	By (1) and (2), $f$ is also a sequence of permissible folds on $f(\G)$. Inductively, $f$ is a sequence of permissible folds on $f^j(\G)$ for any $j$. Therefore, $f^j$ is a permissible sequence of folds on $f(\G)$ for any $j$.
	In particular, $f^k$ is a permissible sequence of folds on $\G$. Then, by Lemma~\ref{l:F local inj}, $f^k(e)$ is tight, as required.\\
\end{proof}

\begin{prop}
	The map $F$ of \S \ref{ss:MapF} represents a principal fully irreducible outer automorphism.
\end{prop}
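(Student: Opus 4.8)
I plan to verify the four hypotheses of the Full Irreducibility Criterion (Proposition~\ref{prop:FIC}) for $F$, and then to compute the ideal Whitehead graph of the represented outer automorphism $\vphi$ and check it is a disjoint union of $2r-3 = 3$ triangles.

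First I would check that $F$ is a train track map. The forty folds $F_1,\dots,F_{40}$ form a directed loop in $\mathcal{A}_3$: the block $F_1,\dots,F_4$ realizes an edge of $P\mathcal{A}_3$, the subsequent blocks are its images under powers of the order-$10$ relabeling $\sigma$, and since $40 = 4\cdot 10$ the composite $F' = F_{40}\circ\cdots\circ F_1$ is a genuine self-map $\Gamma\to\Gamma$ with $F'(\ell_1)$ carrying the same turns as $\ell_1$ up to relabeling. Hence the lemma immediately preceding this proposition applies and $F'$ is a train track map; as $F$ is a power of $F'$ and powers of train track maps are train track maps, $F$ is one too. The lemma identifying $\mathcal{T}_{h^n}$ with $\mathcal{T}_h$ then gives $\mathcal{T}_F = \mathcal{T}_{F'}$, so the computed $\mathcal{T}_F$ is the relevant turn set. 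Next, inspecting the displayed edge images one sees that each $F(e)$ traverses every edge of $\Gamma$ in at least one orientation, so the transition matrix of $F$ is strictly positive, hence Perron--Frobenius, hence $F$ is irreducible (and expanding). For connectedness of the local Whitehead graphs, I use that $F$ is transparent, so the edge set of $LW(F;v)$ equals $\mathcal{T}_F|_v$; reading off $\mathcal{T}_F$ at each of the three vertices of $\Gamma$ (valences $3,3,4$), one checks directly that these turns span all directions at that vertex.

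The one genuine obstacle is showing $F$ is PNP-free, which is needed both to apply the FIC cleanly and to validate the identification $IW(\vphi)\cong\bigsqcup SW(F;v)$ over fixed vertices. Here I would exploit that $F$ comes from a loop in $\mathcal{A}_3$ and that the Lonely Direction Property is preserved along it (Proposition~\ref{p:LoneDirectionPropertyPreserved}), which pins down the illegal turns of $F$; one then runs the Bestvina--Handel search for indivisible Nielsen paths, a finite procedure for an expanding train track map, and (since $F$ is transparent/rotationless, a PNP exists iff an iNP does) verifies it produces none, as confirmed by the code in \cite{g20b}.

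With all four FIC hypotheses established, $\vphi$ is fully irreducible. Finally, to see it is principal, I would determine the fixed directions of $F$ from its direction map — at each valence-$3$ vertex all three directions are fixed, and at the valence-$4$ vertex exactly three are fixed (the fourth being non-periodic, hence excluded from the stable Whitehead graph) — and then read off $\mathcal{T}_F$ that the three turns among the fixed directions at each vertex are $F$-taken. Thus each vertex contributes a triangle to $\bigsqcup SW(F;v)$, giving $IW(\vphi)$ equal to three disjoint triangles, so $\vphi$ is principal.
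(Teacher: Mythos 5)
Your proposal is correct and follows essentially the same route as the paper: verify the FIC hypotheses (train track, expanding, Perron--Frobenius by inspection of the edge images, connected local Whitehead graphs via transparency and $\mathcal{T}_F$ restricted to vertices), defer PNP-freeness to a computational check, and conclude principality by counting the stable Whitehead graph triangles at the fixed directions. The only cosmetic differences are that the paper certifies the train track property by observing no degenerate turn lies in $\mathcal{T}_F$ rather than invoking the preceding automaton-loop lemma, and it attributes the PNP verification to Coulbois's sage package \cite{c12} (or the cited methodology) rather than to \cite{g20b}.
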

	
	\begin{proof}
In light of Proposition \ref{prop:FIC} (and the fact that a Perron-Frobenius transition matrix implies irreducibility for a transparent map), for full irreducibility, it suffices to show that $F$ is an expanding, Perron-Frobenius train track map with no PNPs, and that the local Whitehead graphs connected. Recall that $F$ has been taken to be transparent.

We first prove $F$ is an expanding irreducible train track map with Perron-Frobenius transition matrix. There does not exist a direction $d$ in $\Gamma$ such that $\{d,d\}$ is an element of $\mathcal{T}_F$. That is, for each $e\in E(\Gamma)$ and $k\in \ZZ_{>0}$, we have that $F^k(e)$ does not contain $e'\overline{e'}$ for any $e'\in E(\G)$. Thus, $F$ is a train track map. Since $F$ is a train track map, so that there is no cancellation in any edge $F^k$-images, and $|F(\alpha)| > 1$ for each $\alpha\in E(\G)$, it follows that $|F^k(\alpha)|\to \infty$ as $k\to\infty$ for each $\alpha\in E(\G)$. By inspection, $F$ maps each $\alpha\in E^+(\G)$ over each $\beta\in E^+(\G)$ at least once. Thus, each entry in the transition matrix, $A$, of $F$ is strictly positive. It follows that $A^k$ is strictly positive for each $k\in\ZZ_{>0}$. Thus $F$ is Perron-Frobenius.

One can show $F$ has no PNPs using the sage train track package of Coulbois (\cite{c12}) or the methodology of \cite{IWGII}, \cite[Example 3.4]{hm11}, \cite{p15}, \cite[Lemma 4.7]{counting}.

We now show that for each $v\in V(\G)$, we have that $LW(v;F)$ is connected. Let $v_1$ be the vertex of $\G$ with directions $\{\overline{d}, \overline{e}, \overline{c}, a\}$, and $v_2$ the vertex of $\G$ with directions $\{d, e, b\}$, and $v_3$ be the vertex of $\G$ with directions $\{c, \overline{a}, \overline{b}\}$, and $D_{v_i}$ the set of directions at $v_i$. Restricting $\mathcal{T}_F$ to each vertex, one obtains:
	
	$\mathcal{T}_F|_{v_1} = \{\{\bar{d},\bar{e}\}, \{\bar{c},\bar{e}\}, \{\bar{c},\bar{d}\}, \{\bar{e},a\}\}$
	
	$\mathcal{T}_F|_{v_2} = \{\{d,e\}, \{b,d\}, \{b,e\}\}$
	
	$\mathcal{T}_F|_{v_3} = \{\{\bar{b},c\}, \{\bar{a},c\}, \{\bar{a},\bar{b}\}\}$.
	
	Thus, $LW(v_1;F) = (D_{v_1}, \mathcal{T}_F|_{v_1}), LW(v_2;F) = (D_{v_2}, \mathcal{T}_F|_{v_2})$, and $LW(v_3;F) = (D_{v_3}, \mathcal{T}_F|_{v_3})$, which are connected graphs, as desired. And $F$ is a fully irreducible outer automorphism. Further, since the only nonperiodic direction is $a$, this implies that the ideal Whitehead graph is a union of 3 triangles, and thus $\vphi$ is a principal fully irreducibly outer automorphism.\\
\end{proof}

\begin{prop}\label{p:AllLoopsPrincipal}
There exists a power $F^p$ of $F$ satisfying the following. Suppose $L$ is a loop in the automata containing the loop $L'$ giving the fold sequence for $F^p$. Let $g$ denote the train track map obtained by the sequence of folds the directed edges in $L$ represent. Then $g$ represents a principal fully irreducible outer automorphism.
\end{prop}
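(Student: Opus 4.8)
The plan is to verify the hypotheses of the Full Irreducibility Criterion (Proposition~\ref{prop:FIC}) for $g$ and then read off principality from its Whitehead graphs. First I would reduce to a convenient form: cyclically reordering the directed loop $L$ only conjugates the represented outer automorphism, and both full irreducibility and principality are conjugacy invariants, so I may assume $L$ is based at the same state as $L'$. Then $g$ factors as $g=h\circ\tilde F^p$, where $\tilde F^p$ is the fold sequence $L'$ (equal to $F^p$ up to a relabeling of edges) and $h$ is the composition of the remaining permissible folds of $L$. By the lemma immediately preceding this proposition, $g$ is a train track map; in particular every iterated edge-image is tight, so $\mathcal T_g$ is exactly the union of the edge-sets of the local Whitehead graphs of $g$, and the whole argument can be run at the level of $\mathcal T_g$ and $Dg$.

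Next I would dispatch the ``easy'' hypotheses. Since $F$—hence $F^p$, hence $\tilde F^p$—maps each element of $E^+(\Gamma)$ over each element of $E^+(\Gamma)$ at least once (as noted in the proof of the previous proposition), and every fold sends each edge over at least a copy of itself, the composite $g=h\circ\tilde F^p$ also maps each element of $E^+(\Gamma)$ over each element at least once. Hence the transition matrix of $g$ is strictly positive; so it is Perron--Frobenius and $g$ is irreducible. Since moreover $|g(\alpha)|\ge 2$ for every $\alpha$ and there is no cancellation in iterates of a train track map, $|g^n(\alpha)|\to\infty$, i.e. $g$ is expanding.

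I would then take $p$ to be a multiple of the power making $F$ transparent, so that $\tilde F^p$ (and its fold sequence $L'$) is transparent and $\mathcal T_{\tilde F^p}=\mathcal T_F$. Because $g$ carries the comprehensive loop $\ell$ of its base state to a relabeling of $\ell$ under an ornamentation-preserving isomorphism permuting the turn-set of $\ell$, and each $g^k(e)$ is a subpath of the tight loop $g^k(\ell)$, one gets $\mathcal T_g\subseteq(\text{turn-set of }\ell)$; by the \LDP\ (preserved throughout the automaton by Proposition~\ref{p:LoneDirectionPropertyPreserved}) together with the computation already done for $F$, this turn-set is exactly a relabeling of $\mathcal T_F$. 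For the reverse inclusion $\mathcal T_F\subseteq\mathcal T_g$—which forces the local Whitehead graphs of $g$ to coincide with those of $F$, hence to be connected, and forces the distinguished direction to be the unique non-periodic direction of $g$—I would track the $\tilde F^p$-images of edges through the iterates of $g$, using the edge-surjectivity of the previous paragraph and the generation procedure of Lemma~\ref{l:lhwalg}. Once $\mathcal T_g=\mathcal T_F$ (up to relabeling) is known, the local Whitehead graph at the valence-$4$ vertex is a triangle on the three periodic directions with a single pendant edge to the distinguished direction—hence connected—while at each valence-$3$ vertex it is already a triangle; establishing this reverse inclusion cleanly (controlling how the extra folds $h$ act on directions so that no turn of $\mathcal T_F$ is lost and no periodic direction is destabilized) is the more delicate of the combinatorial points.

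The main obstacle, however, is showing $g$ has no periodic Nielsen paths—this is where the freedom to take $p$ large is essential. Here I would argue that an indivisible Nielsen path of $g$, being controlled by $g$'s unique illegal turn (the turn through the distinguished direction, by the \LDP\ analysis above), would descend through the factorization $g=h\circ\tilde F^p$ to a periodic-Nielsen-path configuration for $\tilde F^p$, i.e. for $F^p$ and hence for $F$; since $\tilde F^p$ is transparent, PNP-free, and—for $p$ large—arbitrarily expanding, a bounded-cancellation estimate rules this out. (Alternatively, once full irreducibility of $\varphi_g$ is in hand, one could invoke the lone-axis uniqueness theorem of \cite{loneaxes}: a proper-full-fold train track representative of a principal—hence lone axis—outer automorphism must be PNP-free, so a PNP of $g$ would contradict the classification of its train track representatives.) Granting PNP-freeness, the Full Irreducibility Criterion applies and $\varphi_g$ is fully irreducible; and since $IW(\varphi_g)=\bigsqcup_v SW(g;v)$ is the disjoint union of the three triangles identified above—that is, $2\cdot 3-3=3$ triangles—$\varphi_g$ is principal, as desired.
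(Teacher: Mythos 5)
Your reduction steps and the ``easy'' hypotheses are fine and match what the paper needs: the preceding lemma gives that $g$ is a train track map, positivity of the transition matrix of $g=h\circ\tilde F^p$ gives Perron--Frobenius and expansion, and the forward inclusion $\mathcal{T}_g\subseteq(\text{turn-set of }\ell)$ follows from permissibility/the \LDP. But the two load-bearing claims --- that $g$ has the same local and stable Whitehead graphs as $F^p$ (your reverse inclusion $\mathcal{T}_F\subseteq\mathcal{T}_g$, which you yourself flag as ``the more delicate point'' and do not prove), and above all that $g$ has no PNPs --- are exactly where the paper does \emph{not} argue from scratch: it transfers the proof of \cite[Lemma 5.3]{stablestrata}, observing that the hypothesis there that the fold line lies in $B^k(A_\vphi,R,\veps)$ is only used to guarantee that a Stallings fold decomposition of $g$ contains the full fold decomposition of $F^p$ (true here by construction of $L\supseteq L'$), and that the ``witness loop'' of that lemma is played by the comprehensive loop carried through the automaton. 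Your proposal replaces this citation with a sketch that does not work as stated: a periodic Nielsen path for $g$ does not ``descend through the factorization $g=h\circ\tilde F^p$'' to any Nielsen-path configuration for $F^p$ --- these are different self-maps, and a path $\rho$ with $g^k(\rho)\simeq\rho$ rel endpoints bears no a priori relation to the $F^p$-dynamics --- and the bounded-cancellation appeal is not uniform: the proposition fixes a single power $p$ in advance, while the cancellation and expansion constants of $g$ depend on the arbitrary loop $L$, so ``take $p$ large'' cannot absorb them. The actual argument (in the cited lemma) controls an iNP via the unique illegal turn together with the turns certified by the witness/comprehensive loop, and is genuinely technical.

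Your parenthetical fallback is moreover circular: PNP-freeness is a hypothesis of the Full Irreducibility Criterion as used here, and principality (hence lone-axis status, hence the PNP-freeness conclusion of \cite{loneaxes}) is only obtained at the end of the argument from the stable Whitehead graphs of a PNP-free representative; you cannot invoke it to supply the missing PNP-freeness. So the structure of your write-up is right (FIC plus identification of $IW$ with three triangles), but the proof has a genuine gap at PNP-freeness and at the equality of Whitehead graphs; to close it you either need to reproduce the argument of \cite[Lemma 5.3]{stablestrata} adapted to the comprehensive loop, or cite it as the paper does.
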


\begin{proof}
Note that we have already taken $F$ to be transparent.

We first need that $g$ contains no PNPs. This argument is in the proof of \cite[Lemma 5.3]{stablestrata} only mildly disguised. The first observation is that the condition in \cite[Lemma 5.3]{stablestrata} that a periodic fold line satisfies $\mL\in B^k(A_\vphi,R,\veps)$ is simply to ensure that a Stallings fold composition for $g$ contains the Stallings fold decomposition for $F^p$, which holds true in our case. The condition of having a ``witness loop'' is replaced by the comprehensive loop (and its images) determining our automaton. The rest of the proof is then exactly the same.

By the proof of \cite[Lemma 5.3]{stablestrata}, we further have that $F^p$ and $g$ have the same local Whitehead graphs and stable Whitehead graphs and that the transition matrix for $g$ is Perron-Frobenius. Thus, since the local Whitehead graphs for $F$, hence $F^p$, were connected, we have that those for $g$ are also. We have established that $g$ satisfies all conditions in the Full Irreducibility Criterion, hence represents a fully irreducible outer automorphism.

Further, since $g$ has no periodic Nielsen paths, its ideal Whitehead graph is now the union of the stable Whitehead graphs of $F^p$, hence of $F$, which we have already established is the ideal Whitehead graph for a principal fully irreducible outer automorphism.
\end{proof}

\begin{prop}\label{p:PrincipalGivesLoop}
	Suppose $\vphi$ is a principal fully irreducible outer automorphism in $Out(F_3)$ and $g$ is a train track representative of $\vphi$.
Then some power of $g$'s Stallings fold decomposition is fold-conjugate to one determining a directed loop in the Lonely Direction Automaton $\mA_3$.
\end{prop}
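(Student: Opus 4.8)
The plan is to start from a principal fully irreducible $\varphi \in \mathrm{Out}(F_3)$ with a train track representative $g \colon \Gamma \to \Gamma$, and to show its Stallings fold decomposition (after passing to a power) lands inside the automaton $\mathcal{A}_3$. First I would record the structural constraints that being principal forces on $g$: by the discussion at the end of \S4.4, $\varphi$ is a lone axis outer automorphism, so $g$ has a \emph{unique} Stallings fold decomposition, no PNPs, and $IW(\varphi)$ is a disjoint union of $2r-3 = 3$ triangles. Since $IW(\varphi) \cong \bigsqcup SW(g;v)$ over the fixed vertices of $g$ (with no PNPs), and a triangle $SW(g;v)$ requires at least three fixed directions at $v$, a rank-$3$ count of directions (using that $\Gamma$ is high-valence with $|E^+(\Gamma)| = 5$, so $10$ directions total, and the valence profile is $(3,3,4)$ by the Proposition in \S2.3) forces the combinatorial type: exactly one non-periodic direction, sitting at the valence-$4$ vertex, and the three triangles are the local stable Whitehead graphs. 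In particular the local Whitehead graph at the valence-$4$ vertex has $4$ vertices but only the $3$ triangle-edges among the fixed directions plus whatever edges involve the non-periodic direction; counting shows $\mathcal{T}_g$ at that vertex misses exactly two turns, both containing the non-periodic direction. This is precisely the Lonely Direction Property for $(\Gamma, \ell)$, where $\ell$ is the comprehensive loop recording $\mathcal{T}_g$ — so the starting state of the fold decomposition is an LDP state.

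Next I would show the folds appearing in the Stallings fold decomposition of $g$ are exactly of the kind the automaton records: each is a different-length (proper full) fold, permissible with respect to the (images of the) comprehensive loop $\ell$, and carried out at the valence-$4$ vertex. That the folds are proper full folds and occur at the single illegal turn is part of the lone-axis package (cf.\ \cite{loneaxes}); permissibility with respect to $\ell$ is automatic since $\ell$ is $g$-taken and a Stallings fold never folds a $g$-taken turn of the repeller-side train track structure. By Proposition~\ref{p:LoneDirectionPropertyPreserved}, the LDP — hence the whole combinatorial type — is preserved under each such fold, so the entire fold decomposition of $g$ stays among LDP states and their permissible-fold edges. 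Thus the Stallings fold decomposition of $g$ determines a directed \emph{path} in $\bigcup \mathcal{A}(\Gamma,\ell)$ (over LDP pairs).

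To upgrade this path to a \emph{loop}: because $\varphi$ is fully irreducible its train track map has a periodic (indeed, up to a power, an honest) fold line, so some power $g^N$ has a Stallings fold decomposition that is, up to fold-conjugacy, a concatenation of shifts of a single periodic block; equivalently the fold decomposition of $g^N$ closes up to a directed loop. Since this loop lives in $\bigcup \mathcal{A}(\Gamma,\ell)$ and consists of states each of which lies on a directed cycle, it lies in the union of the maximal strongly connected components, which is exactly $\mathcal{A}_3$ by definition. Finally, passing between $g$ and its power, and between different train track representatives of $\varphi$, only changes the fold decomposition up to cyclic permutation and fold subdivision — i.e.\ up to fold-conjugacy in the sense of Definition~\ref{d:stallingsfolds} — so the statement holds for (a power of) $g$ itself.

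The main obstacle I expect is the middle step: rigorously arguing that the Stallings fold decomposition of a principal train track map, carried out at its unique illegal turn, produces precisely a \emph{different-length} fold \emph{permissible} with respect to the combinatorially-defined loop $\ell$ at each stage — i.e.\ that the abstract Stallings fold process coincides stage-by-stage with the permissible-fold edges of $\mathcal{A}(\Gamma,\ell)$, with no partial folds, no folds at valence-$3$ vertices, and no loss of the LDP. This is where one must combine the lone-axis structure theory with the bookkeeping of Lemmas~\ref{l:dirimg}, \ref{l:foldturns}, \ref{F(ell) tight}, and Proposition~\ref{p:LoneDirectionPropertyPreserved}; everything else is either a direction/valence count or a formal consequence of full irreducibility and fold-conjugacy.
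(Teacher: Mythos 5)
Your overall strategy runs parallel to the paper's (lone-axis structure, Whitehead-graph counting to land in an LDP state, permissibility of the Stallings folds with respect to a comprehensive loop, closing up via a power and fold-conjugacy), but there are two concrete gaps. First, you derive the combinatorial type of $g$ itself by assuming its graph $\Gamma$ is $5$-edged with valence profile $(3,3,4)$ and exactly one non-periodic direction. That is not justified for an arbitrary train track representative of a principal $\vphi$ --- indeed Main Theorem A itself exhibits principal train track representatives on $6$-edged graphs (the second row), so the assumption is false in general. The identification $IW(\vphi)\cong\bigsqcup SW(g;v)$ only constrains the fixed vertices and fixed directions; to get ``all vertices fixed, all but one direction fixed, one illegal turn, every fold a different-length fold'' one must invoke the lone-axis machinery in the precise form of \cite[Corollary 3.8]{loneaxes}: there is \emph{another} representative $g'$ and a power $h=g'^p$ with these properties, and only for $h$ does the direction count force a $5$-edged graph with one valence-$4$ vertex. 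The conclusion is then transported back to $g$ at the end, exactly as you do, via the shared axis and fold-conjugacy; but the structural analysis must be carried out on $h$, not on $g$.

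Second, and more seriously, the automaton's states are pairs $(\Gamma,\ell)$ with $\ell$ an actual comprehensive loop, and you simply posit ``the comprehensive loop recording $\mathcal{T}_g$'' and declare permissibility ``automatic.'' The existence of a single comprehensive loop whose taken turns are exactly the local-Whitehead-graph turns, \emph{and} whose successive images are never folded at any stage of the (iterated) Stallings decomposition, is the crux and requires a construction. The paper builds it explicitly: using the expanding Perron--Frobenius property, $h^{k}(e)$ contains two copies of some edge $e'$, hence $h^{k+1}(e)$ contains two copies of a two-edge subpath $e_1e_2$; the subpath $\sigma$ of $h^{k+1}(e)$ running from the first to the second occurrence is a loop lying \emph{inside the image of an edge}, which is precisely what guarantees (since $h$ is a train track map) that no Stallings fold ever folds a turn in any image of $\sigma$; a further $h$-image then makes $\sigma$ comprehensive and makes its turn set exactly the $LW$-graph turns. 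Your phrase ``a Stallings fold never folds a $g$-taken turn'' gestures at the right legality statement, but without locating the loop inside an edge-image (or an equivalent argument tracking its images through the intermediate graphs $\Gamma_i$), the stage-by-stage permissibility you need --- which you yourself flag as the main obstacle --- is not established.
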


\begin{proof}
Suppose $g\colon \G \to \G$ is a train track representative of a principal fully irreducible outer automorphism $\vphi$. Since $\vphi$ is a principal fully irreducible outer automorphism, it and its powers are lone axis outer automorphisms. By \cite[Corollary 3.8]{loneaxes}, there exists another representative $g'\colon \G' \to \G'$ of $\vphi$ and a power $h=g'^p$ of $g'$ so that $h$ fixes all vertices of $\G'$ and all but one direction of $\G'$. Further, $h$ has precisely one illegal turn and each fold in its Stallings fold decomposition is a different-length fold (or $h$ would identify vertices, hence could not fix all vertices). Since $\vphi^p$ is a lone axis outer automorphism and $h$ represents $\vphi^p$, we know $h$ has no PNPs, implying that $IW(\vphi)$ will be the disjoint union of its stable Whitehead graphs. The stable Whitehead graphs are the restrictions of the local Whitehead graphs to the periodic direction vertices. This tells us:
\begin{itemize}
  \item[(1)] $\G'$ is $5$-edged, with all but a single vertex being of valence-3, and the final vertex (which we call $w$) being of valence $4$, and
  \item[(2)] $LW(h,v)$ is a triangle for each valence-3 vertex $v$, and
  \item[(3)] $LW(h,w)$ contains a triangle, between the vertices representing the 3 fixed directions at $w$. Call the vertex representing the 4th direction $\alpha$.
\end{itemize}

Now, notice that the final different-lengthed fold yields a direction coming from the interior of an edge, so that it can only ever be involved in a single taken turn (see Lemma 2). By the process of elimination, this tells us that $\alpha$, which necessarily represents this direction, is attached to the rest of $LW(h,w)$ by precisely a single edge.

One can construct, as follows, a comprehensive loop in $\G'$ whose image will not be folded by any fold in the Stallings fold decomposition of $h$, even if the Stallings fold decomposition is performed multiple times. Consider any edge $e$ of $\G'$. Because $h$ is expanding Perron-Frobenius, for some $k$, we'll have that $h^k(e)$ will contain at least 2 copies of some edge $e'$ of $\G'$. Consider any turn $\{\overline{e_1}, e_2\}$ taken by $h(e')$. $h^{k+1}(e)$ will contain at least 2 copies of $e_1e_2$. Let $\sigma$ denote the subpath of $h^{k+1}(e)$ starting directly after $e_1$ in a first copy of $e_1e_2$ and then ending directly after $e_1$ in a second copy of $e_1e_2$. Note that $\sigma$ is a loop. Also, since $h$ is a train track map and $\sigma$ is in the $h^{k+1}$-image of an edge, the Stallings folds cannot fold any turns in an image of $\sigma$, so will always be permissible. If $\alpha$ is not comprehensive, since the $h$-image of each edge contains each other edge,  we can replace it by a $h$-image that is comprehensive, we can also do so to ensure that the turns taken by $\sigma$ are precisely those appearing in the $LW(h,v)$, as $v$ varies over the vertices in $V(\G')$. Then $\G'$ satisfies the Lonely Direction Property and the Stallings fold sequence for $h$ gives a loop in $\mA(\G')$ with $\sigma$ as the comprehensive loop.

Since $g$ and $h$ are train track representatives (of powers) of the same lone axis fully irreducible, they share an axis, hence (up to taking a power) have fold-conjugate Stallings fold decompositions and the proposition follows.

\end{proof}

\begin{mainthmA}\label{t:MainTheoremA}
The graphs carrying train track representatives of principal fully irreducible outer automorphisms in $Out(F_3)$ are precisely:
~\\
\vspace{-5mm}
\begin{figure}[H]
\centering
\includegraphics[width=4in]{1stRow.PNG}
\label{fig:21Graphs}
\end{figure}
That is, each of the graphs of $CV_3$ where either all vertices are valence-3 or all vertices are valence-3 except that one is valence-4 carries a train track representative a of principal fully irreducible outer automorphism in $Out(F_3)$.
\end{mainthmA}

\begin{proof}
First notice that these are precisely the graphs in the primary maximal strongly connected component of $\mA_3$
(which one may recall we denote by $P\mA_3$),
together with those obtained by partially completing the folds represented by the edges of the component
(we will call these the \emph{edge-determined partial fold graphs}.
The graphs and edges in the other maximal strongly connected components appear also in this component.

Next, notice that, given a directed loop $\mL$ in the automaton,
the Stallings fold sequences obtained by starting the loop at different nodes in $\mL$ yield $\out$-conjugate outer automorphisms.
Since the property of being a principal fully irreducible outer automorphism is a conjugacy class invariant,
the property of an outer automorphism obtained from $\mL$ being a principal fully irreducible outer automorphism
is invariant with respect to the node in $\mL$ one starts traversing the loop from.
Hence, to realize a graph $G$ appearing in the first row of the theorem statement,
we can take the loop $L'$ of Proposition \ref{p:AllLoopsPrincipal},
append to it a loop containing the desired graph $G$ in our automaton
(existing because $L'$ and $G$ are in the same maximal strongly connected component)
and, again using Proposition \ref{p:AllLoopsPrincipal},
obtain a train track representative on $G$ of a principal fully irreducible outer automorphism.

We now prove that the edge-determined partial fold graphs (depicted in the theorem statement's 2nd row)
carry train track representatives of principal fully irreducible elements of $Out(F_3)$.
For an arbitrary such graph $\Gamma'$ represented by a directed edge $\mE$ of $P\mA_3$,
suppose $\Gamma'$ is obtained by partially folding the edge $e'$ over the edge $e$ in a graph $\Gamma$.
Further, suppose $g\colon\Gamma\to\Gamma$ is an irreducible transparent train track representative
of a principal fully irreducible outer automorphism whose Stallings fold decomposition
is given by a directed loop $\mL$ in the automaton starting with a fold of $e'$ over $e$.
Such a loop exists by appending to the loop $L'$ of Proposition \ref{p:AllLoopsPrincipal}
a loop traversing $\mE$, then following the arguments in the previous paragraph.
We claim, as follows, that there is a train track representative $g'\colon\Gamma'\to\Gamma'$
of a principal fully irreducible element of $Out(F_3)$ obtained by conjugating $g$ by this partial fold.

Since $g$ is irreducible and transparent, $length(g(e))\geq |E(\Gamma)|>2$.
Supposing first that the direction of $e$ is fixed, we can write $g(e)=ee_1\cdots e_k$ for some $k\geq 2$.
Subdivide $e$ into subedges so that $e=a_0\cdots a_k$, and $g(a_0)=e$, and $g(a_j)=e_j$ for $j=1, \dots, k$.
Similarly subdivide $e'$ as $a_0'\cdots a_{k+1}'$ with $g(a_0')=e$ and $g(a_j')=e_j$ for $j=1, \dots, k$.
Fold $a_0$ and $a_0'$ to obtain an edge $E$. Call the fold $f$ and let $g'\colon\Gamma'\to\Gamma'$
be the quotient of $g$ under $f$.
Let $g_1$ be such that $g=g_1 \circ f$ (and so that then also $g'= f \circ g_1 $).
If $e'$ is instead fixed, replace $e$ with $e'$ in $g(e)$ and $g(e')$ and then define $g'$ the same.
We show that $g'$ is a train track map.

By construction, $g'$ takes vertices to vertices. We now show each $g'^k$ is locally injective on edge interiors.
Notice that
$g^{k+1}=g_1 \circ g'^k \circ f$.
Suppose for the sake of contradiction that for some edge $E'$ in $\Gamma'$, we have $g'^k(E')$ is not tight.
We show that in this circumstance $g$ would not be a train track map.
We consider separately the cases where $E'=E$ and $E'\neq E$. In the latter case, let $a\in E(\Gamma)$ be such that $E'=f(a)$.
Since $g$ is a train track map,  $g^{k+1}(a)=g_1 \circ g'^k \circ f(a)=g_1 \circ g'^k (E')$ must be tight.
But $g'^k(E')$ was not tight and one cannot map a nontight path to a tight path (in this case via $g_1$),
implying that $g^{k+1}(a)$ could not be tight, a contradiction.
Now suppose $E'=E$. Then $f(e)=E a_1\cdots a_k$ and
$g^{k+1}(e)=g_1 \circ g'^k \circ f(e)=g_1 \circ g'^k (E a_1\cdots a_k)$ must be tight.
But $g'^k(E'=E)$ was not tight, and so $g'^k(E a_1\cdots a_k)$ cannot be tight, and thus $g^{k+1}(e)$ cannot be tight,
again contradicting that $g$ is a train track map.

Since the outer automorphisms represented by $g$ and $g'$ are $Out(F_3)$-conjugate,
$g'$ must also represent a principal fully irreducible outer automorphism.

Proposition \ref{p:PrincipalGivesLoop} proves the reverse containment.

\end{proof}

\section{Outer Space Interpretation}

Culler--Vogtmann Outer space was first defined in \cite{cv86}. We refer the reader to \cite{FrancavigliaMartino,b15,v15} for background on Outer space and give only abbreviated discussion here. For $r\ge 2$ we denote the (volume-one normalized) Outer space for $F_r$ by $CV_r$.  Points of $CV_r$ are equivalence classes of volume-1 marked metric graphs $h:R_r\to\G$ where $R_r$ is the $r$-rose, where $\G$ is a finite volume-1 metric graph with betti number $b_1(\Gamma)=r$ and with all vertices of degree $\ge 3$, and where $h$ is a homotopy equivalence called a \emph{marking}. There is an asymmetric metric $d_{\cv}$ on $CV_r$.

Given a Stallings fold decomposition of a train track map $g$, one can define a ``periodic fold line'' in $CV_r$.
In \cite{stablestrata} it is proved that such periodic fold lines of train track maps are geodesics in the sense that,
given 3 points $\gamma(t_1),\gamma(t_2),\gamma(t_3)$ on the geodesic $\gamma$ with $t_1<t_2<t_3$,
we have that $d_{\cv}(\gamma(t_1),\gamma(t_2))+d_{\cv}(\gamma(t_2),\gamma(t_3))\leq d_{\cv}(\gamma(t_1),\gamma(t_3))$.
One may call such a geodesic $\gamma_g$ an \emph{axis} for $g$ and for $\vphi$, where $\vphi$ is the outer automorphism represented by $g$. This is all explained carefully in \cite{stablestrata}.

$CV_r$ has a simplicial complex structure with some faces missing. It has an open simplex for each marked graph (obtained by varying the lengths on the edges of that graph). The faces of a simplex are obtained by collapsing the edges of a forest (so that their lengths become zero).

The following is an easy Corollary of Main Theorem A.

\begin{mainthmB}
The open simplices in $CV_3$ with principal axes passing through them are precisely those whose underlying graph is listed in Main Theorem A, i.e all those in the top 2 dimensions.
\end{mainthmB}

A few words on the proof: By the proofs of Proposition \ref{p:PrincipalGivesLoop} and Main Theorem A,
we have that the Stallings fold decomposition of any principal fully irreducible element of $Out(F_3)$
will consist only of folds between graphs represented by nodes in $P\mA_3$,
together with possibly part of a fold represented by an edge in $P\mA_3$.
At the start and end of each of the full folds, the axis will pass through the simplex of a 1st row graph.
During the fold (or at the start of end of a partial fold),
the axis will pass through the open simplices of 2nd row graphs.
Varying the marking on a simplex simply $Out(F_3)$-conjugates the outer automorphism,
so does not change whether it is principle fully irreducible outer automorphism.\\

\bibliographystyle{alpha}

\bibliography{PaperRefs}

\end{document}